\numberwithin{equation}{section}
\newtheorem{Theorem}{Theorem}
\newtheorem{Corollary}{Corollary}
\newtheorem{Lemma}{Lemma}
\theoremstyle{definition}
\newtheorem{Remark}{Remark}
\newcommand{\eps}{\varepsilon}
\newcommand{\ups}{\upsilon}
\newcommand{\la}{\lambda}
\newcommand{\RR}{{\mathbb{R}}}
\newcommand{\CC}{{\mathbb{C}}}
\newcommand{\ZZ}{{\mathbb{Z}}}
\newcommand{\ii}{{\rm i}}
\newcommand{\cH}{\mathcal{H}}
\newcommand{\comment}[1]{}
\DeclarePairedDelimiter{\abs}{\lvert}{\rvert}
\DeclarePairedDelimiter{\norm}{\lVert}{\rVert}
\DeclareMathOperator{\Realpart}{Re}
\renewcommand{\Re}{\Realpart}
\newcommand{\ie}{\emph{i.e.}}
\newcommand{\eg}{\emph{e.g.}}
\newcommand{\cf}{\emph{cf.}}
\newcommand{\etal}{\emph{et al.}}
\newcommand{\Com}{\mathbb{C}}
\newcommand{\Int}{\mathbb{Z}}
\newcommand{\dist}{\mathop{\mathrm{dist}}\nolimits}
\definecolor{DarkGreen}{rgb}{0,0.5,0.1} 
\newcommand{\txtD}{\textcolor{DarkGreen}}
\newcommand\soutD{\bgroup\markoverwith
{\textcolor{DarkGreen}{\rule[.5ex]{2pt}{1pt}}}\ULon}
\newcommand\soutP{\bgroup\markoverwith
{\textcolor{blue}{\rule[.5ex]{2pt}{1pt}}}\ULon}
\newcommand{\Hm}[1]{\leavevmode{\marginpar{\tiny%
$\hbox to 0mm{\hspace*{-0.5mm}$\leftarrow$\hss}%
\vcenter{\vrule depth 0.1mm height 0.1mm width \the\marginparwidth}%
\hbox to
0mm{\hss$\rightarrow$\hspace*{-0.5mm}}$\\\relax\raggedright #1}}}
\begin{document}
\title{\textbf{\LARGE 
Location of eigenvalues of non-self-adjoint discrete \\Dirac operators}}
\author{B.~Cassano,$^a$ \ O.~O.~Ibrogimov,$^b$
	\ D.~Krej\v{c}i\v{r}{\'\i}k,$^c$ \ and 
	\ F.~{\v S}tampach\,$^d$}
\date{\small 
\begin{quote}
\emph{
	\begin{itemize}
        \item[$a)$]
          Department of Mathematics,
          Universit\`{a} degli Studi di Bari,
          via Edoardo Orabona 4, 70125, Bari, Italy;
          biagio.cassano@uniba.it.
		\item[$b)$] 
		Institute for Theoretical Physics, ETH Z\"urich,
		Wolfgang-Pauli-Strasse~27, 8093 Z\"urich, Switzerland;
		oibrogimov@phys.ethz.ch.
		\item[$c)$] 
		Department of Mathematics, Faculty 
		of Nuclear Sciences and Physical 
		Engineering, Czech Technical University 
		in Prague,
		Trojanova 13, 12000 Prague~2, Czech 
		Republic; david.krejcirik@fjfi.cvut.cz.
		\item[$d)$] 
		Department of Applied Mathematics, 
		Faculty of Information Technology, 
		Czech Technical University in Prague, 
		Th{\' a}kurova~9, 16000 Praha, 
		Czech Republic; stampfra@fit.cvut.cz
	\end{itemize}
	}
\end{quote}
19 October 2019}
\maketitle
\begin{abstract}
\noindent
We provide quantitative estimates on the location of
eigenvalues of one-dimensional discrete 
Dirac operators with complex 
$\ell^p$-potentials for $1\leq p \leq\infty$. 
As a corollary, subsets of the essential spectrum free of embedded eigenvalues 
are determined for small $\ell^1$-potential. Further possible improvements and sharpness of the obtained spectral bounds are also discussed.
\end{abstract}
%
\section{Introduction}

\subsection{Motivation and state of the art}
The principal objective of this paper is to initiate 
a mathematically rigorous investigation of 
spectral properties of quantum systems 
characterised by a fusion of the following three features:
($\alpha$) relativistic, ($\beta$) discrete, ($\gamma$) non-self-adjoint.
While models within one of the respective classes have been intensively studied
over the last decades, the combination seems to represent a new challenging 
branch of mathematical physics. 

The relativistic feature~($\alpha$) 
is implemented by considering the \emph{Dirac} equation,
which is well understood in 
the simultaneously continuous and self-adjoint settings,
see~\cite{Thaller} for a classical reference.
Apart from describing relativistic quantum matter,
it also models quasi-particles in new materials like graphene.

The discrete feature~($\beta$) is due to introducing 
the Dirac operator on a \emph{lattice} rather than in the Euclidean space.
In the non-relativistic (Schr\"odinger) setting,
it is well known that the discretisation is not a mere shortcoming 
motivated by numerical solutions,
but it is in fact a more realistic model for semiconductor crystals,
see \cite{Boykin-Klimeck_2004}.
Indeed, it is essentially the tight-binding approximation in solid-state physics.
Works on the fusion ($\alpha$)$\cap$($\beta$) exist 
in the self-adjoint setting, 
see \cite{deOliv-Prad-JMP-2005,Car-deOliv-Prad-JMP-2011,
Gol-Hau-MFAT-2014,Kop-Tesch-JMAA-2016}
and references therein.

Finally, the non-self-adjointness~($\gamma$) is 
implemented through possibly \emph{non-Hermitian} perturbations
added to the free Dirac operator. 
Despite the new physical motivations 
coming from quasi-Hermitian quantum mechanics (\cf~\cite{Bagarello-book}),
there are very few results on non-self-adjoint 
Dirac operators in the literature.
For the fusion ($\alpha$)$\cap$($\gamma$) in the continuous setting,
see \cite{Cuenin-Laptev-Tretter_2014,Cuenin-IEOT14,Dubuisson_2014,
Sambou_2016,Enblom_2018,Cuenin-JFA-17,Cuenin-Siegl-LMP-18,Fan-Kre-LMP19}. 
For the complete combination ($\alpha$)$\cap$($\beta$)$\cap$($\gamma$),
we are only aware of the works \cite{Bair-Celeb-1999,Hulko-AMP-2019}
concerned with estimates on the number of discrete eigenvalues.
 
In this paper, we are interested in the location of eigenvalues
of \emph{one-dimensional} discrete Dirac operator perturbed 
by non-Hermitian potentials
(in particular, the coefficients of the potential are allowed to be complex).
The main ingredient in our proofs is the Birman--Schwinger principle
and the results are of the nature of the celebrated result 
of Davies \etal~\cite{Abramov-Aslanyan-Davies_2001} 
for one-dimensional continuous Schr\"odinger operators.	
However, following the strategy developed 
in~\cite{Fan-Kre-Veg-JST18} (see also \cite{Fan-Kre-Veg-JFA18} 
for an alternative approach), we manage to cover eigenvalues embedded 
in the essential spectrum as well. This article can be considered as 
a relativistic follow-up of~\cite{Ibr-Sta-19} by two of 
the present authors.

\subsection{Mathematical model}
Let $\{e_{n}\}_{n\in\ZZ}$ be the standard basis of the Hilbert 
space $\ell^{2}(\ZZ)$ and let $d:\ell^2(\ZZ) \to \ell^2(\ZZ)$ 
be the difference operator determined by the equation 
$
de_{n}:=e_{n}-e_{n+1}, n\in\ZZ.
$
The free discrete Dirac operator $D_0$ is a self-adjoint 
bounded operator in the Hilbert space $\ell^2(\ZZ)\oplus\ell^2(\ZZ)$ 
given by the block operator matrix
\begin{equation}\label{free.dirac}
	D_0 :=
	\begin{pmatrix}
	m & d \\[0.5ex]
	d^* & -m
	\end{pmatrix}, 
\end{equation}
where $m$ is a non-negative constant and $d^*$ is the adjoint 
operator to $d$ which fulfills $d^*e_{n}=e_{n}-e_{n-1}$,~$n\in\ZZ$.
It is well known that the spectrum of $D_0$ is absolutely 
continuous and is given by
\begin{equation}\label{spec.H0}
	\sigma(D_0)=\bigl[-\sqrt{m^2+4},-m\bigr]
	\cup
	\bigl[m,\sqrt{m^2+4}\bigr],
\end{equation}
see for instance~\cite{Gol-Hau-MFAT14}.

It is worth noting that $D_{0}$ can be represented by a doubly-infinite 
Jacobi matrix by using a suitably chosen orthonormal basis 
of $\ell^2(\ZZ)\oplus\ell^2(\ZZ)$. Indeed, if we set
\[
 f_{2n}:=0\oplus e_{n} \quad \mbox{ and } \quad f_{2n+1}:=e_{n}\oplus 0,
\]
for $n\in\ZZ$, then the matrix representation of $D_{0}$ with respect to 
the orthonormal basis $\{f_{n}\}_{n\in\ZZ}$ reads
\begin{equation}
 	D_0=\begin{pmatrix}
	\ddots & \ddots & \ddots &  &  &  & &      \\
	&  -1     & -m & 1     &        & & &     \\
	&  & 1     & m & -1     &         & &     \\
	&  & & -1     & -m & 1            & &     \\
	&  & & & 1     & m & -1             & \\
	&    &    &        &        & \ddots & \ddots & \ddots\\
	\end{pmatrix}.
\label{eq:D_0_per_Jac_mat}
\end{equation}

Moreover, it is often advantageous to view the above matrix as 
the $2\times2$-block tridiagonal Laurent matrix
\begin{equation}\label{free.dirac.matrix}
	D_0=\begin{pmatrix}
	\ddots & \ddots & \ddots &        &        & &       \\
	&  a^T     & b & a      &        & &       \\
	&        & a^T      & b  & a      & &     \\
	&        &        &  a^T     & b  & a &     \\       &        &        &        & \ddots & \ddots & \ddots\\
	\end{pmatrix},
\end{equation}
where 
\begin{equation}
b:=\begin{pmatrix}
-m & 1\\
1 & m
\end{pmatrix}, 
\quad 
a:=\begin{pmatrix}
0 & 0\\
-1 & 0
\end{pmatrix}. 
\end{equation}
The matrix~\eqref{free.dirac.matrix} naturally determines 
a unique operator acting on the Hilbert space 
$\cH:=\ell^{2}(\ZZ,\CC^{2})$ that is unitarily equivalent 
to~$D_{0}$ given by~\eqref{free.dirac}. 
We do not distinguish 
the unitarily equivalent operators
in the notation.

Further, we intend to perturb~\eqref{free.dirac.matrix} by the 
$2\times2$-block diagonal matrix 
\begin{equation}\label{V}
 V=\bigoplus_{n\in\ZZ}\ups_{n},
\end{equation}
where
\begin{equation}\label{mat.ups_n}
	\ups_n:=
		\begin{pmatrix}
		\ups^{11}_n & \ups^{12}_n
		\\[1ex]
		\ups^{21}_n & \ups^{22}_n
		\end{pmatrix}
\end{equation}
is a given sequence of $2\times 2$ complex matrices. We denote the 
resulting operator by $D_{V}:=D_{0}+V$. In view of the initial 
setting~\eqref{free.dirac}, such perturbation corresponds to a 
perturbation of each of the four operator entries by a diagonal 
matrix operator acting on~$\ell^{2}(\ZZ)$. For special symmetric choices of the coefficients, the perturbations of the diagonal entries represent an electric potential 
while the off-diagonal elements introduce a magnetic potential to $D_{0}$;
we proceed in a greater generality by making no hypotheses about the coefficients 
except for summability conditions.

In this paper we are concerned with the location of eigenvalues 
of the Dirac operator~$D_V$. If the entries of $\ups_{n}$ vanish as 
$n\to\pm\infty$, \ie~$\ups^{ij}_n\to0$ as $n\to\pm\infty$, then $V$ is compact and hence the essential spectrum of the perturbed operator~$D_V$ coincides with $\sigma(D_0)$.
The goal of the present paper is to investigate the location 
of the point spectrum of~$D_{V}$. To this end, we consider the 
block diagonal matrix potentials $V$ given by \eqref{V}\txtD{--}\eqref{mat.ups_n} 
which belong to the Banach space
$\ell^p\bigl(\ZZ,\CC^{2\times2}\bigr)$ equipped with the norm
\begin{equation}\label{potent.norms}
	\|V\|_p=\bigg(\sum_{n\in\ZZ} |\ups_n|^p\bigg)^{1/p},
	\quad 1\leq p<\infty, 
	\qquad \|V\|_{\infty}=\sup_{n\in\ZZ}|\ups_n|,
\end{equation}
where $|\ups_n|$ denotes the operator norm of the matrix 
$\ups_n$. As it is seen by comparing~\eqref{V} and~\eqref{potent.norms}, 
we slightly abuse the notation by not distinguishing between $V$ as the operator
and $V$ as the doubly-infinite $2\times2$-matrix valued sequence, 
whenever suitable. Except the notation $|\ups|$ used for the spectral norm of a matrix $\ups \in \Com^{2 \times 2}$, we denote by $|\ups|_\mathrm{HS}$ the Hilbert-Schmidt (or Frobenius) norm of~$\ups$ throughout the paper. Recall that $|\ups| \leq |\ups|_\mathrm{HS}$.

\subsection{Main results}
Our main result for $\ell^1$-potentials reads as follows:
\begin{Theorem}\label{thm:1-norm}
Let $V\in\ell^1\bigl(\ZZ,\CC^{2\times2}\bigr)$. Then 
\begin{equation}\label{sp.enc.l1}
	\sigma_{\rm{p}}(D_V)\subset
	\Big\{\la\in\CC \;\big|\;\; 
	|\la^2-m^2||\la^2-m^2-4| \leq
	\bigl(|\la+m|+|\la-m|\bigr)^2\bigl\|V\bigr\|^2_1
	\Big\}.
\end{equation}
\end{Theorem}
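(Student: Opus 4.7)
The plan is to combine the Birman--Schwinger principle with an explicit pointwise bound on the resolvent kernel of $D_0$. The starting point is the identity $D_0^{2}=m^{2}I+\Delta$, where $\Delta:=d^{*}d=dd^{*}$ is the discrete Laplacian acting componentwise on $\ell^{2}(\ZZ,\CC^{2})$; this reduces the computation of $(D_0-\lambda)^{-1}=(D_0+\lambda)(D_0^{2}-\lambda^{2})^{-1}$ to the classical Green's function of the discrete Laplacian, namely $(\Delta-\mu)^{-1}(m,n)=\frac{w^{|m-n|}}{w^{-1}-w}I_{2}$, where $\mu:=\lambda^{2}-m^{2}$ and $w$ is the unique root of $w+w^{-1}=2-\mu$ with $|w|\leq 1$. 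Distributing $D_0+\lambda$ over this scalar Laplacian kernel produces explicit formulas (three cases, according to $\mathrm{sgn}(m-n)$) for the $2\times 2$-matrix valued Green's function $G_\lambda(m,n)$ of $D_0-\lambda$.

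The crux of the argument rests on two algebraic identities. The first, $|w^{-1}-w|^{2}=|\lambda^{2}-m^{2}|\,|\lambda^{2}-m^{2}-4|$, matches the left-hand side of \eqref{sp.enc.l1} and follows from the discriminant of $w^{2}-(2-\mu)w+1=0$. The second, $(1-w)^{2}=w(m^{2}-\lambda^{2})$, follows by combining $w+w^{-1}=2-\mu$ with $(1-w)(1-w^{-1})=\lambda^{2}-m^{2}$; it yields the crucial equality $|1-w|^{2}=|w|\,|\lambda-m|\,|\lambda+m|$. Feeding these into the Frobenius-norm bound $|M|\leq|M|_{\mathrm{HS}}$ applied to the explicit matrix factor of $G_\lambda(m,n)$, and using $|w|\leq 1$ to absorb the surplus powers of $|w|$, I expect to obtain the uniform kernel estimate
\[
\sup_{m,n\in\ZZ}|G_\lambda(m,n)| \,\leq\, \frac{|\lambda-m|+|\lambda+m|}{\sqrt{|\lambda^{2}-m^{2}|\,|\lambda^{2}-m^{2}-4|}}.
\]
For $\lambda\in\sigma_{\mathrm{p}}(D_V)\setminus\sigma(D_0)$ and $\psi\in\ell^{2}(\ZZ,\CC^{2})$ a corresponding eigenvector, the pointwise identity $\psi_m=-\sum_n G_\lambda(m,n)\ups_n\psi_n$ combined with the kernel estimate above gives
\[
\|\psi\|_{\infty} \,\leq\, \Bigl(\sup_{m,n}|G_\lambda(m,n)|\Bigr)\,\|V\|_{1}\,\|\psi\|_{\infty},
\]
and after dividing by $\|\psi\|_{\infty}>0$ and squaring, one arrives exactly at \eqref{sp.enc.l1}.

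The main obstacle will be the treatment of embedded eigenvalues $\lambda\in\sigma(D_0)\setminus\{\pm m,\pm\sqrt{m^{2}+4}\}$, where $|w|=1$ and $(D_0-\lambda)^{-1}$ is not a bounded operator on $\ell^{2}$. Here I would follow the limiting-absorption strategy of \cite{Fan-Kre-Veg-JST18}: select the branch of $w$ with $|w|<1$ as $\lambda$ is approached from the upper half-plane, note that the pointwise kernel estimate remains valid on the spectrum by continuity of the boundary values, and carry the convolution argument through using only that $\psi\in\ell^{\infty}(\ZZ,\CC^{2})$, which is automatic since $\psi\in\ell^{2}$. The remaining endpoints $\lambda\in\{\pm m,\pm\sqrt{m^{2}+4}\}$ are vacuous, because the left-hand side of \eqref{sp.enc.l1} vanishes there.
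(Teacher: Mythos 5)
Your proposal is correct and arrives at exactly the enclosure \eqref{sp.enc.l1}, but it takes a genuinely different route from the paper's proof. The common core is the explicit $2\times2$-block Laurent kernel $G_\lambda(m,n)=T_{n-m}(k)$ of $(D_0-\lambda)^{-1}$ together with the two identities $|k^{-1}-k|^2=|\lambda^2-m^2||\lambda^2-m^2-4|$ and $|1-k|^2=|k|\,|\lambda^2-m^2|$, which yield the uniform matrix-norm bound $\sup_{m,n}|G_\lambda(m,n)|\le(|\lambda-m|+|\lambda+m|)/\sqrt{|\lambda^2-m^2||\lambda^2-m^2-4|}$. Where you diverge is the functional-analytic step: the paper runs the Birman--Schwinger operator $K(\lambda)=\sqrt{W}(D_0-\lambda)^{-1}U\sqrt{W}$ through an $\ell^2$/Hilbert--Schmidt estimate and invokes the operator-theoretic principle \eqref{BS.disc.EV}, whereas you bound the eigenvector directly in $\ell^\infty$ via $\|\psi\|_\infty\le\bigl(\sup_{m,n}|G_\lambda(m,n)|\bigr)\|V\|_1\|\psi\|_\infty$, effectively estimating $(D_0-\lambda)^{-1}V$ on $\ell^\infty$ rather than the symmetrized Birman--Schwinger operator on $\ell^2$. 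Your route is more elementary (no polar decomposition $V=UW$, no Hilbert--Schmidt norms, no abstract Birman--Schwinger equivalence), and the two deliver identical bounds because both ultimately rest on the same $\sup$-kernel estimate with the same constant. What the paper's $\ell^2$/HS formulation buys, however, is direct re-usability: the inequality \eqref{estim.BS.case.1} is fed verbatim into Stein interpolation (Theorem~\ref{thm:p-norm.stein}), Young's inequality (Theorem~\ref{thm:spec_bound_improved}), and the limiting-absorption Lemma~\ref{Lem.BS}, so all the later $\ell^p$ results are built on it.

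One small imprecision concerns the embedded eigenvalues. You say the convolution identity can be carried through ``using only that $\psi\in\ell^\infty$.'' In fact the limiting-absorption step gives, for $\eps>0$,
\[
\psi_m=-\sum_{n}G_{\lambda+\ii\eps}(m,n)\,\ups_n\psi_n-\ii\eps\sum_{n}G_{\lambda+\ii\eps}(m,n)\,\psi_n,
\]
and to kill the second term as $\eps\to0^+$ you need the Cauchy--Schwarz bound $\eps\bigl|\sum_n G_{\lambda+\ii\eps}(m,n)\psi_n\bigr|\le\eps\bigl(\sum_n|T_n(k)|^2\bigr)^{1/2}\|\psi\|_2$, which requires $\psi\in\ell^2$ (not merely $\ell^\infty$) together with the estimate $\sum_n|T_n(k)|^2=\mathcal{O}(\eps^{-1})$. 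This is exactly the estimate proved in Lemma~\ref{Lem.BS}, so the ingredient is available, but it should be stated that the $\ell^2$ summability of $\psi$ and the $\mathcal{O}(\eps^{-1})$ bound on $\tfrac{1}{|k-k^{-1}|^2(1-|k|^2)}$ are what make the error term vanish. With that clarified, your argument closes correctly.
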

The spectral enclosure in~\eqref{sp.enc.l1} is a compact set symmetric with respect to both the real and the imaginary line. The geometry of its boundary is quite easy to understand. It is an algebraic curve of generically three possible topological configurations depending on the $\ell^{1}$-norm of the potential $V$ and the parameter $m>0$. A closer inspection of the respective polynomial equation shows that, if
\begin{equation}\label{Q0}
	0<\|V\|_{1}^{2}<\frac{m^2}{2}+1-m\sqrt{\frac{m^2}{4}+1},
\end{equation}
the boundary curve consists of four simple closed curves having the end-points of the essential spectrum $\pm m$ and $\pm\sqrt{m^{2}+4}$ in their interiors, respectively. If 
\[
 \frac{m^2}{2}+1-m\sqrt{\frac{m^2}{4}+1}<\|V\|_{1}^{2}<\frac{m^{2}}{4}+1,
\]
the boundary curve comprises two simple closed curves with the intervals $[-\sqrt{m^{2}+4},-m]$ and $[\sqrt{m^{2}+4},m]$ in their interiors, respectively. Finally, for
\[
 \|V\|_{1}^{2}>\frac{m^{2}}{4}+1,
\]
the boundary curve is a closed simple curve with the interval $[-\sqrt{m^{2}+4},\sqrt{m^{2}+4}]$ in its interior. Figure~\ref{fig:ell1plots} shows all the topological configurations.  
\begin{figure}
	\centering
 	\includegraphics[width=0.85\textwidth]{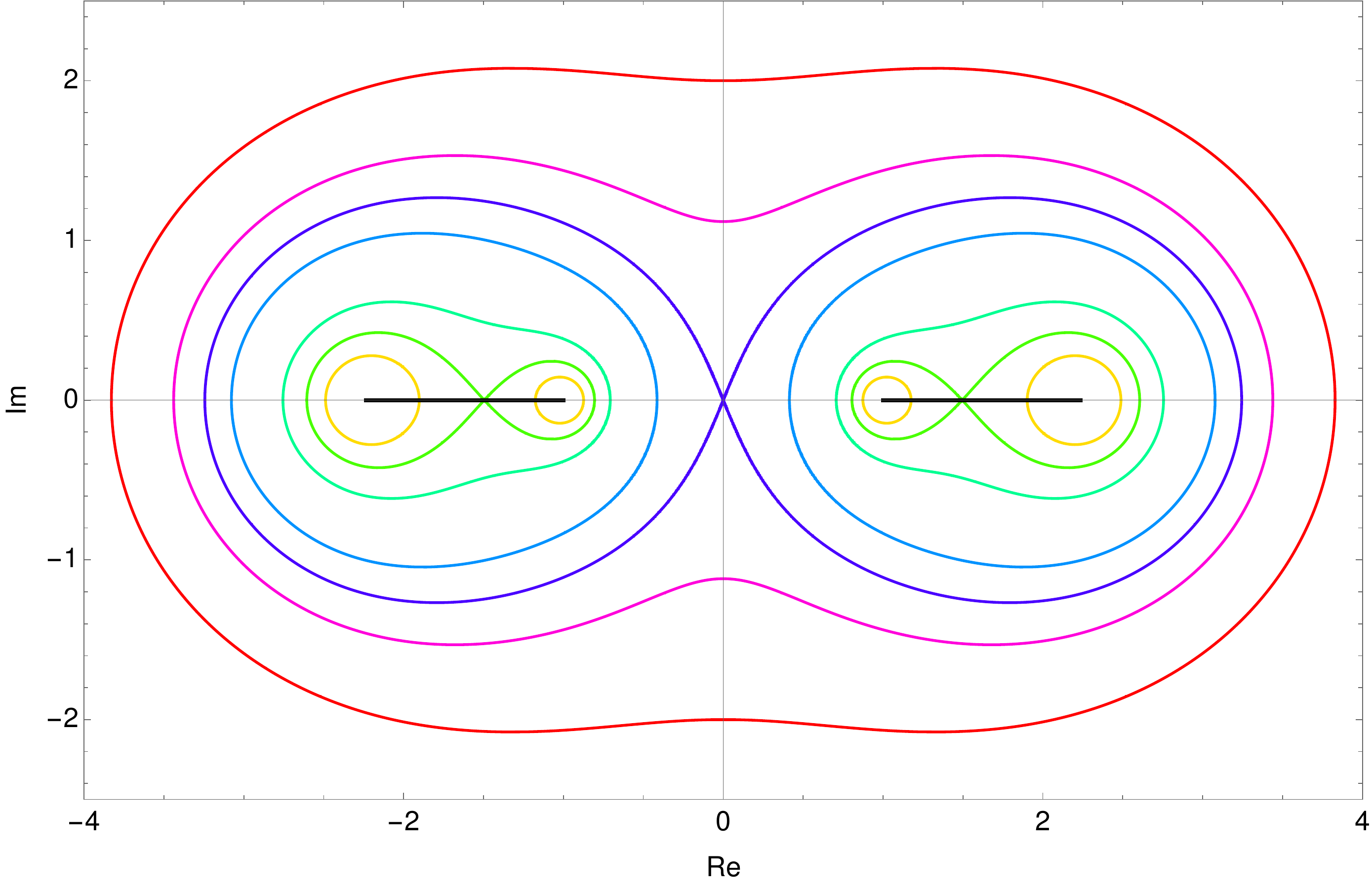}
 	\caption{\small{The plots of the expanding boundary curves 
 			corresponding to the spectral enclosure~\eqref{sp.enc.l1} 
 			for various values of 
 			$\|V\|_1\in[0.5, 1.5]$
 			 and $m=1$. The black intervals indicate the essential spectrum of $D_V$.}}
    \label{fig:ell1plots}
\end{figure}

As an immediate corollary of the firstly mentioned possible configuration for the boundary curve of~\eqref{sp.enc.l1}, we obtain subsets of the essential spectrum of~$D_V$ that are free of embedded eigenvalues of~$D_V$. 

\begin{Corollary}\label{cor:emb.EV.free.region}
	If the potential $V\in\ell^1\bigl(\ZZ,\CC^{2\times2}\bigr)$ satisfy~\eqref{Q0}, then the union of intervals
	\[
	(-\lambda_{+},-\lambda_{-})\cup(\lambda_{-},\lambda_{+}),
	\]
	where
	\begin{equation}\label{la.pm}
	\lambda_{\pm}:=\sqrt{m^{2}+2
		\left(1-\|V\|_{1}^2 \pm\sqrt{\left(1-\|V\|_{1}^2\right)^2-\|V\|_{1}^2m^2}\,\right)},
	\end{equation}
	is free of embedded eigenvalues of $D_V$.
\end{Corollary}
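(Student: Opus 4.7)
The plan is to derive the corollary directly from Theorem~\ref{thm:1-norm} by intersecting the spectral enclosure with the essential spectrum $\sigma(D_0)$ and identifying the gap on the real axis.

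First, since $V$ is compact, $\sigma_{\mathrm{ess}}(D_V)=\sigma(D_0)=[-\sqrt{m^2+4},-m]\cup[m,\sqrt{m^2+4}]$, so any embedded eigenvalue $\lambda$ must in particular be a real number with $|\lambda|\in[m,\sqrt{m^2+4}]$. By the obvious symmetry of the enclosure~\eqref{sp.enc.l1} under $\lambda\mapsto-\lambda$, it suffices to treat $\lambda\in[m,\sqrt{m^2+4}]$; the claim for the negative interval then follows. For such $\lambda$ we have $\lambda^2-m^2\geq 0$, $\lambda^2-m^2-4\leq 0$, and $|\lambda+m|+|\lambda-m|=2\lambda$, so the enclosure~\eqref{sp.enc.l1} reduces to
\begin{equation*}
	(\lambda^2-m^2)(m^2+4-\lambda^2)\leq 4\lambda^{2}\|V\|_{1}^{2}.
\end{equation*}

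Setting $t:=\lambda^{2}$ and rearranging, this becomes the quadratic inequality
\begin{equation*}
	p(t):=t^{2}-2\bigl(m^{2}+2-2\|V\|_{1}^{2}\bigr)\,t+m^{2}(m^{2}+4)\geq 0.
\end{equation*}
A direct computation shows that the discriminant of $p$ equals $16\bigl[(1-\|V\|_{1}^{2})^{2}-m^{2}\|V\|_{1}^{2}\bigr]$, which is strictly positive precisely when $\|V\|_{1}^{2}$ lies outside the interval whose endpoints are the roots of $s^{2}-(m^{2}+2)s+1=0$, namely $\frac{m^{2}}{2}+1\pm m\sqrt{\frac{m^{2}}{4}+1}$. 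Under hypothesis~\eqref{Q0}, we are below the smaller root, so $p$ has two real roots $t_{-}<t_{+}$, which by inspection coincide with $\lambda_{-}^{2}$ and $\lambda_{+}^{2}$ from~\eqref{la.pm}. Consequently $p(t)\geq 0$ iff $t\leq\lambda_{-}^{2}$ or $t\geq\lambda_{+}^{2}$, so an eigenvalue $\lambda\in[m,\sqrt{m^2+4}]$ is excluded exactly when $\lambda\in(\lambda_{-},\lambda_{+})$.

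It remains to verify that the open interval $(\lambda_{-},\lambda_{+})$ is actually nonempty and strictly contained in $(m,\sqrt{m^{2}+4})$ under~\eqref{Q0}. Since $\|V\|_{1}^{2}<1$ by~\eqref{Q0}, the inner square root satisfies $\sqrt{(1-\|V\|_{1}^{2})^{2}-m^{2}\|V\|_{1}^{2}}<1-\|V\|_{1}^{2}<1+\|V\|_{1}^{2}$, which yields the strict bounds $m^{2}<\lambda_{-}^{2}<\lambda_{+}^{2}<m^{2}+4$. The main (and only mild) obstacle is the algebraic check that the roots $t_{\pm}$ of $p$ match the explicit expression in~\eqref{la.pm} and that~\eqref{Q0} is precisely the condition making the discriminant positive; both are routine simplifications. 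Combining this with the symmetry argument yields the claim for the mirror interval $(-\lambda_{+},-\lambda_{-})$ and completes the proof.
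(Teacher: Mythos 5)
Your proof is correct and follows the same route the paper intends: the paper treats the corollary as an immediate consequence of the boundary-curve configuration of the enclosure~\eqref{sp.enc.l1}, and you have simply carried out the explicit algebra (intersecting the enclosure with the real axis, reducing to a quadratic in $t=\lambda^2$, matching its roots with $\lambda_\pm^2$, and checking that~\eqref{Q0} makes the discriminant positive with the roots strictly inside $(m^2,m^2+4)$). The only cosmetic quibble is that the phrase ``precisely the condition making the discriminant positive'' overstates things slightly, since the discriminant is also positive above the larger root of $s^2-(m^2+2)s+1$; but you already correctly note that~\eqref{Q0} puts $\|V\|_1^2$ below the smaller root, so the argument is sound.
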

\begin{Remark}
 In fact, Corollary~\ref{cor:emb.EV.free.region} can be improved under additional assumptions that $1+\ups_{n}^{12}\neq0$ and $1+\ups_{n}^{21}\neq0$ for all $n\in\ZZ$. In this case, the whole interior of~\eqref{spec.H0} is free of embedded eigenvalues of~$D_{V}$. Indeed, let $D_{V}$ be viewed as the 2-periodic Jacobi matrix~\eqref{eq:D_0_per_Jac_mat} correspondingly perturbed by~$V$ for the moment. Then, if $\ups^{ij}\in\ell^{1}(\ZZ)$ for all $i,j\in\{1,2\}$, there exist two linearly independent solutions $\phi^{(\pm)}$ of the eigenvalue equation $D_{V}\phi=\lambda\phi$ such that $\phi_{n}^{(\pm)}\sim w^{\pm n}a_{n}$, as $n\to\infty$, for a nontrivial $2$-periodic sequence $a_{n}$, where $|w|=1$ provided that $\lambda\in(-\sqrt{m^2+4},-m)\cup(m,\sqrt{m^2+4})$. 
 As a result, there cannot be a square summable solution of $D_{V}\phi=\lambda\phi$, for $\lambda\in(-\sqrt{m^2+4},-m)\cup(m,\sqrt{m^2+4})$, if the set of solutions is of dimension 2 which is guaranteed by the additional assumptions $1+\ups_{n}^{12}\neq0$ and $1+\ups_{n}^{21}\neq0$ for all $n\in\ZZ$.
 This was proved in~\cite{Ger-Van-JAT86} for a certain real $\ell^{1}$-perturbations $V$. The reality is, however, inessential for the proof and the claim can be extended to complex $\ell^{1}$-perturbations as well, see the proof of~\cite[Thm.~3]{Ger-Van-JAT86}.
\end{Remark}

Our next result provides a spectral estimate in terms of 
the $\ell^p$--norm of the potential for $p>1$. 
The strategy of its derivation relies on an application 
of Stein's complex interpolation theorem to an appropriate 
analytic family of Birman-Schwinger type operators. 
This approach was successfully used in the continuous 
setting recently, see e.g.~\cite{Cuenin-JFA-17,Frank-TAMS-18, Coss2019}. 

\begin{Theorem}\label{thm:p-norm.stein}
	Let $1<p\leq\infty$ and $V\in\ell^p\bigl(\ZZ,\CC^{2\times2}\bigr)$. 
	If $\la\notin\sigma(D_0)$ satisfies 
	\begin{equation}\label{eq:p-norm.2}
	g_p(\la,m) \|V\|_p<1
	\end{equation}
	with
	\begin{equation}\label{func.g}
	g_p(\la,m):=\frac{(|\la-m|+|\la+m|)^{\frac{1}{p}}}{|\lambda^2-m^2|^{\frac{1}{2p}}{|\lambda^2 -m^2-4|}^{\frac{1}{2p}}(\dist(\la, \sigma(D_0)))^{1-\frac{1}{p}}},
	\end{equation}
	then $\la\notin\sigma(D_V)$.
\end{Theorem}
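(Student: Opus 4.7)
My plan is to combine the Birman--Schwinger principle with Stein's complex interpolation theorem, interpolating between the trivial $p=\infty$ bound and the $p=1$ bound that is implicit in Theorem~\ref{thm:1-norm}. I would start by writing the polar decomposition $V=U|V|$, where $U=\bigoplus_n U_n$ is a partial isometry and $|V|=\bigoplus_n|\ups_n|$, and introducing, for $\lambda\notin\sigma(D_0)$, the Birman--Schwinger operator
\[
    B_V(\lambda):=|V|^{1/2}\,U\,(D_0-\lambda)^{-1}\,|V|^{1/2}.
\]
Its non-zero spectrum coincides with that of $V(D_0-\lambda)^{-1}$, so $\lambda\in\sigma_{\mathrm{p}}(D_V)$ would force $\|B_V(\lambda)\|\geq 1$; it is therefore enough to establish $\|B_V(\lambda)\|\leq g_p(\lambda,m)\|V\|_p$, as~\eqref{eq:p-norm.2} will then yield $\|B_V(\lambda)\|<1$.

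To obtain this norm bound I would embed $B_V(\lambda)$ into the analytic family
\[
    T(z):=|V|^{pz/2}\,U\,(D_0-\lambda)^{-1}\,|V|^{pz/2},\qquad z\in S:=\{z\in\CC:0\leq\Re z\leq 1\},
\]
defined blockwise via the matrix functional calculus, noting that $T(1/p)=B_V(\lambda)$. After verifying the standard prerequisites of Stein's theorem (analyticity in the interior of $S$ and continuity with admissible growth on $S$, which I would first check for finitely supported $V$ and then obtain in general by approximation), I would estimate $T(z)$ on the two edges of the strip. On $\Re z=0$ every $|\ups_n|^{pz/2}$ is a partial isometry, hence $\||V|^{pz/2}\|\leq 1$ and $\|T(z)\|\leq\|(D_0-\lambda)^{-1}\|=\dist(\lambda,\sigma(D_0))^{-1}$. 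On $\Re z=1$, the operator $T(z)$ differs from $T(1)$ only by unitary factors, and $T(1)$ is precisely the Birman--Schwinger operator associated with the potential $W:=V|V|^{p-1}$, whose polar decomposition reads $W=U|V|^{p}$ and which therefore satisfies $\|W\|_1=\sum_n|\ups_n|^{p}=\|V\|_p^{p}$. Invoking the operator-norm version of Theorem~\ref{thm:1-norm}, namely $\|B_W(\lambda)\|\leq g_1(\lambda,m)\|W\|_1$, then gives $\|T(z)\|\leq g_1(\lambda,m)\|V\|_p^{p}$ on $\Re z=1$.

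Stein's interpolation theorem applied at $z=1/p$ finally delivers
\[
    \|B_V(\lambda)\|=\|T(1/p)\|\leq\dist(\lambda,\sigma(D_0))^{-(1-1/p)}\bigl(g_1(\lambda,m)\|V\|_p^{p}\bigr)^{1/p}=g_p(\lambda,m)\|V\|_p,
\]
which is the desired inequality. The main obstacle I anticipate is the $\Re z=1$ endpoint: Theorem~\ref{thm:1-norm} is stated as a spectral enclosure rather than as an operator-norm bound, so one must first revisit its proof and extract the stronger statement $\|B_W(\lambda)\|\leq g_1(\lambda,m)\|W\|_1$ for every $W\in\ell^1(\ZZ,\CC^{2\times2})$ implicit in the Birman--Schwinger argument behind it. A secondary technical nuisance is the admissible growth of $z\mapsto T(z)$ on $S$; since $|V|^{pz/2}$ need not be uniformly bounded in $z$ when $|V|$ fails to be bounded away from $0$ and $\infty$, this is most efficiently handled by the finitely-supported-approximation argument mentioned above, followed by a routine limiting procedure in $\|V\|_p$.
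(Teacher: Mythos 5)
Your proposal is essentially the paper's proof: the same analytic family of Birman--Schwinger-type operators raised to the complex power $pz/2$, the same two endpoint bounds (the trivial $\dist(\la,\sigma(D_0))^{-1}$ bound on $\Re z=0$ and the $\ell^1$ Hilbert--Schmidt bound, i.e.\ the operator-norm form of Theorem~\ref{thm:1-norm} coming from \eqref{estim.BS.case.1}, on $\Re z=1$), and Stein's interpolation at $z=1/p$.

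Two small remarks. First, with the polar decomposition $V=U|V|$ the partial isometry must sit to the \emph{right} of the resolvent: the operator whose nonzero spectrum matches that of $(D_0-\la)^{-1}V$ is $|V|^{1/2}(D_0-\la)^{-1}U|V|^{1/2}$, whereas your $B_V(\la)=|V|^{1/2}\,U\,(D_0-\la)^{-1}|V|^{1/2}$ has nonzero spectrum matching $|V|U(D_0-\la)^{-1}\neq V(D_0-\la)^{-1}$, so the spectral equivalence you invoke fails as written. Moving $U$ to the other side (or, equivalently, writing $V=|V^*|U$ and using $|V^*|$ in place of $|V|$) repairs this; the endpoint norm estimates are unaffected since they only use $\|U\|\le1$. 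Second, the uniform-boundedness concern you raise and the proposed finite-support approximation are unnecessary: since $V\in\ell^p\subset\ell^\infty$, one has $\sup_{0\le\Re z\le1}\|T(z)\|\le\max\{1,\|V\|_\infty^{p}\}/\dist(\la,\sigma(D_0))$ directly, which is exactly what the paper records before applying Stein.
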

\begin{Remark}
For $p=\infty$, the function in \eqref{func.g} has to be understood as 
\[
g_\infty(\la,m)=\frac{1}{\dist(\la,\sigma(D_0))}.
\]
\end{Remark}

Third theorem concerns with spectral bounds for 
$\ell^{p}$-potentials with $p\geq1$ again.
In particular, the bound for $\ell^{1}$-potentials is an improvement of Theorem~\ref{thm:1-norm}.
The price we have to pay, however, is that the new bounds are quite complicated and not entirely explicit since they involve spectral norms of the matrices
\begin{equation}
 T_{0}(k)\!=\!\frac{1}{k^{-1}-k}\begin{pmatrix}
                                \lambda-m & 1-k \\
                                1-k & \lambda+m
                            \end{pmatrix}
 \;\,\mbox{ and }\;\,
 T_{1}(k)\!=\!\frac{k}{k^{-1}-k}\begin{pmatrix}
                                \lambda-m & 1-k \\
                                1-k^{-1} & \lambda+m
                            \end{pmatrix}\!,
\label{eq:def_T0_T1}
\end{equation}
which arise in the formula for the resolvent operator $(D_{0}-\lambda)^{-1}$, see Section~\ref{Sec.free}. Moreover, in contrast to Theorems~\ref{thm:1-norm} and~\ref{thm:p-norm.stein}, the spectral enclosures are not expressible entirely in the spectral parameter~$\lambda$. Rather than that they use the auxiliary parameter~$k$ with $|k|<1$. The relation between $\lambda$ and $k$ is determined by the equality
\begin{equation}\label{Jouk}
 \la^{2}=m^2+2-k-k^{-1}
\end{equation}
which introduces a one-to-two mapping $\lambda=\lambda(k)$ between the punctured unit disc $\{k\in\CC \mid 0<|k|<1\}$ and the resolvent set~$\rho(D_{0})$. This mapping plays the same role as the Joukowski mapping in the case of discrete Schr{\" o}dinger operator, see~\cite{Ibr-Sta-19} for details. 

The proof of the following theorem is based on a discrete version of Young's inequality.
Here and in the sequel, for $p\in(1,\infty]$, we denote by 
$q\in[1,\infty)$ the corresponding H\"older exponent, 
\ie~$q=p/(p-1)$ if $1<p<\infty$ and $q=1$ if $p=\infty$.

\begin{Theorem}\label{thm:spec_bound_improved}
Let $1 \leq p\leq\infty$ and assume 
$V\in\ell^p\bigl(\ZZ,\CC^{2\times2}\bigr)$. 
If $\la\notin\sigma(D_0)$ satisfies 
\begin{equation}\label{eq:p-norm_impr}
h_q(\la,m) \|V\|_p<1
\end{equation}
then $\la\notin\sigma(D_V)$,
with
\begin{equation}
h_q(\la,m):=
\begin{cases}
\left(|T_{0}(k)|^{q}+\frac{2}{1-|k|^{q}}|T_{1}(k)|^{q}\right)^{\!1/q}
\quad &\text{ if }1\leq q < \infty,
\\
\max\left\{|T_{0}(k)|,|T_{1}(k)|\right\} \quad &\text{ if }q=\infty, 
\end{cases}
\end{equation}
and $k$ the unique point in the punctured unit disk
$\{k\in\CC\;|\;\; 0<|k|<1\}$ such that $\la^{2}=m^2+2-k-k^{-1}$.
The matrices $T_{0}(k)$ and $T_{1}(k)$ are defined
in~\eqref{eq:def_T0_T1}.
\end{Theorem}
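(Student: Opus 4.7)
The plan is to combine the Birman--Schwinger principle with a pointwise H\"older estimate based on the explicit resolvent kernel of $D_0$. Assume $\lambda\notin\sigma(D_0)$ satisfies $h_q(\lambda,m)\|V\|_p<1$, and suppose for contradiction that there exists $\psi\in\cH\setminus\{0\}$ with $D_V\psi=\lambda\psi$. Rewriting this as $V\psi=-(D_0-\lambda)\psi$ and setting $\phi:=V\psi$, one obtains the fixed-point identity $\phi=-V(D_0-\lambda)^{-1}\phi$. Since $\lambda\notin\sigma(D_0)$ we have $\phi\neq 0$; moreover $\psi\in\ell^2\subset\ell^\infty$ and the pointwise bound $|\phi_n|\leq|\ups_n|\,\|\psi\|_\infty$ shows $\phi\in\ell^p(\ZZ,\CC^2)$.

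The heart of the argument is to bound $V(D_0-\lambda)^{-1}$ as an operator on $\ell^p(\ZZ,\CC^2)$ by $\|V\|_p\,\|R_0(\mydot;\lambda)\|_q$, where $R_0(n-j;\lambda)$ is the $2\times 2$-block convolution kernel of $(D_0-\lambda)^{-1}$. By translation invariance of $D_0$ and the matrix-vector H\"older inequality,
\[
\bigl|\bigl(V(D_0-\lambda)^{-1}\phi\bigr)(n)\bigr|\leq|\ups_n|\sum_{j\in\ZZ}\bigl|R_0(n-j;\lambda)\bigr|\,|\phi(j)|\leq|\ups_n|\,\|R_0(\mydot;\lambda)\|_q\,\|\phi\|_p.
\]
Raising to the $p$-th power and summing over $n$ gives the claimed operator bound (with the obvious modification when $p=\infty$). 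Combined with the fixed-point relation, this forces $1\leq\|V\|_p\,\|R_0(\mydot;\lambda)\|_q$, contradicting~\eqref{eq:p-norm_impr} once we show $\|R_0(\mydot;\lambda)\|_q=h_q(\lambda,m)$.

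The remaining task is to compute $R_0(\mydot;\lambda)$ explicitly. From the analysis of the free resolvent carried out in Section~\ref{Sec.free}, under the parametrisation~\eqref{Jouk} which selects the unique $k$ with $|k|<1$, the kernel takes the form $R_0(0;\lambda)=T_0(k)$ while for $n\neq 0$ it equals $T_1(k)\,k^{|n|-1}$, up to transposition distinguishing the two lattice directions (which leaves the spectral norm invariant), with $T_0(k)$ and $T_1(k)$ as in~\eqref{eq:def_T0_T1}. Summing the resulting geometric series yields, for $1\leq q<\infty$,
\[
\|R_0(\mydot;\lambda)\|_q^{\,q}=|T_0(k)|^q+2\,|T_1(k)|^q\sum_{n=1}^{\infty}|k|^{q(n-1)}=|T_0(k)|^q+\frac{2\,|T_1(k)|^q}{1-|k|^q},
\]
while $|k|<1$ gives $\|R_0(\mydot;\lambda)\|_\infty=\max\{|T_0(k)|,|T_1(k)|\}$ when $q=\infty$. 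In both cases this equals $h_q(\lambda,m)$, and the proof is complete.

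The main obstacle is the explicit computation of $R_0(\mydot;\lambda)$: one must solve the two-dimensional second-order recurrence arising from $(D_0-\lambda)\phi=f$, identify the Jost-type solutions square-summable at $\pm\infty$ selected by $|k|<1$, and normalise them via the appropriate Wronskian to recover precisely the matrices $T_0(k)$ and $T_1(k)$. Once this structural input is available, the Birman--Schwinger reduction and the summation of the geometric series are routine, and the argument applies uniformly for all $1\leq p\leq\infty$.
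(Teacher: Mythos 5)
Your argument is correct and arrives at the same bound, but via a genuinely different route from the paper's. The paper works entirely on $\ell^2$: for $p=1$ it estimates $\|K(\lambda)\phi\|_2$ directly using the bound $|T_i(k)|\leq|T_1(k)|$ for $i\neq0$, and for $p>1$ it applies the discrete Young inequality (Lemma~\ref{lem:young}) to the bilinear form $(\phi,K(\lambda)\psi)_{\ell^2}$ with exponents $2p/(p+1),\ q,\ 2p/(p+1)$, where $K(\lambda)=\sqrt{W}(D_0-\lambda)^{-1}U\sqrt{W}$ is the Birman--Schwinger operator. You instead move the action to $\ell^p(\ZZ,\CC^2)$: from the fixed-point identity $\phi=-V(D_0-\lambda)^{-1}\phi$ with $\phi=V\psi$, you bound $V(D_0-\lambda)^{-1}$ as an operator on $\ell^p$ by a single pointwise H\"older estimate against the $\ell^q$-sequence of spectral norms of the block convolution kernel, which sums to exactly $h_q(\lambda,m)$. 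The advantage of your route is that it treats $p=1$ and $p>1$ uniformly and avoids Young's inequality altogether; the paper's route stays inside the $\ell^2$ Birman--Schwinger framework, which is what interfaces naturally with the embedded-eigenvalue analysis elsewhere in the paper. One small point you should make explicit: your fixed-point contradiction rules out $\lambda\in\sigma_{\rm p}(D_V)$, but the theorem asserts $\lambda\notin\sigma(D_V)$. Since $V\in\ell^p$ implies $|\ups_n|\to0$, the perturbation $V$ is compact, so $D_V-\lambda$ is Fredholm of index zero for every $\lambda\notin\sigma(D_0)$ and consequently $\sigma(D_V)\setminus\sigma(D_0)$ consists entirely of eigenvalues; ruling these out therefore suffices, but this step is currently implicit. (Also, the ``main obstacle'' you flag at the end --- the explicit form of the resolvent kernel $T_j(k)$ --- is already supplied by the paper in Section~\ref{Sec.free} and Eqs.~\eqref{eq:def_T0_T1} and~\eqref{Ti}, so it is input rather than an open step.)
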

\begin{Remark}
Clearly, spectral norms of the $2\times2$ matrices~\ref{eq:def_T0_T1} can be expressed explicitly but the resulting formulas are somewhat cumbersome. Namely, we have

\begin{equation}
|T_{1}(k)|^{2}
=\abs{k}^{2}
\frac{\abs{\lambda+ m}^2 + \abs{\lambda-m}^2 +
	(\abs{k}+ \abs{k}^{-1})\abs{\lambda^2 - m^2}}%
{{\abs{\lambda^2 - m^2}\abs{\lambda^2 - m^2-4}}}
\label{eq:T1_norm}
\end{equation}
and
\begin{equation}
|T_{0}(k)|^{2}=\frac{1}{2}\left(B+\sqrt{B^2 - 4C}\right),
\label{eq:T0_norm}
\end{equation}
where
\begin{equation*}
B = 
\frac{\abs{\lambda+m}^2 + \abs{\lambda -m}^2 +
	2\abs{k}\abs{\lambda^2-m^2}}%
{\abs{\lambda^2 - m^2}\abs{\lambda^2 - m^2-4}}
\quad\mbox{ and }\quad
C = 
\frac{\abs{k}}{\abs{\lambda^2-m^2 -4}}.
\end{equation*}
\end{Remark}
\begin{Remark}
	We do not discuss the eigenvalues possibly embedded in~$\sigma(D_0)$ in Theorem~\ref{thm:spec_bound_improved} for $p=1$ similarly as is done in Corollary~\ref{cor:emb.EV.free.region} after Theorem~\ref{thm:1-norm}. Nevertheless, an inspection of the intersection points of the boundary curve of the spectral enclosure of Theorem~\ref{thm:spec_bound_improved} 
	with $\sigma(D_0)$ (when they exist) shows that they actually coincide with the points identified in Corollary~\ref{cor:emb.EV.free.region}. Indeed, it readily follows from formulas~\eqref{eq:T0_norm} and~\eqref{eq:T1_norm} that
	\[
	\lim_{k\to k_{0}}|T_{0}(k)|\leq
	\frac{\abs{\lambda_0 +m} + \abs{\lambda_0 -m}}
	{\sqrt{\abs{\lambda_0^2 - m^2}\abs{\lambda_0^2 -m^2 -4}}}=\lim_{k\to k_{0}}|T_{1}(k)|,
	\]
	for $\lambda_0 \in\sigma(D_0) \setminus \{\pm m, \pm \sqrt{m^2 +4}\}$ and $k_{0}$ a point on the unit circle such that $\lambda_{0}^{2}=m^{2}+2-k_{0}-k_{0}^{-1}$. Consequently, 
	\[
	\lim_{\lambda \to \lambda_0} h_{\infty}(\lambda,m) =
	\frac{\abs{\lambda_0 +m} + \abs{\lambda_0 -m}}%
	{\sqrt{\abs{\lambda_0^2 - m^2}\abs{\lambda_0^2 -m^2 -4}}},
	\]
	which is the expression appearing in the spectral enclosure of Theorem~\ref{thm:1-norm}. Consequently, even if the intervals of~$\sigma(D_0)$ given by the intersection points of the boundary curves of the improved spectral enclosure from Theorem~\ref{thm:spec_bound_improved} for $p=1$ were proved to be free of embedded eigenvalues of~$D_V$, Corollary~\ref{cor:emb.EV.free.region} would not be improved. This is also illustrated by Figure~\ref{fig:sp_ell1_compare} (part~a) in the Appendix, where the enclosures provided by Theorem~\ref{thm:1-norm} and Theorem~\ref{thm:spec_bound_improved} are compared.
\end{Remark}

In addition to the statement of Theorem~\ref{thm:spec_bound_improved}, we prove that the improved spectral enclosure for $\ell^{1}$-potentials is at least partly optimal. Namely, we show that a significant part of the boundary of the spectral enclosure is actually an eigenvalue of a concretely chosen discrete Dirac operator within the studied class. This means that this spectral bound cannot be significantly improved.

The proof of the tighter spectral bound of Theorem~\ref{thm:spec_bound_improved} for $\ell^{1}$-potentials does not  make use of majorizing spectral norms by Hilbert--Schmidt norms.
The reason for a possible but unnecessary passing to the Hilbert--Schmidt norms is 
that the resulting spectral bounds are of comparatively simpler forms. 
If we prefer a less sharp but more explicit result for $1<p\leq\infty$, then majorizing
$|T_{i}(k)|\leq|T_{i}(k)|_{\text{HS}}$, for $i=1,2$, and applying natural estimates for $|T_{i}(k)|_{\text{HS}}$, see Lemma~\ref{lem:estim.Green}, we arrive at the following corollary of Theorem~\ref{thm:spec_bound_improved}.	

\begin{Corollary}\label{cor:p-norm}
	Let $1<p\leq\infty$ and assume $V\in\ell^p\bigl(\ZZ,\CC^{2\times2}\bigr)$. 
	If $\la\notin\sigma(D_0)$ satisfies 
	\begin{equation}\label{eq:p-norm}
	f_q(\la,m) \|V\|_p<1
	\end{equation}
	with
	\begin{equation}
	f_q(\la,m):=\frac{|\la-m|+|\la+m|}{\sqrt{|\lambda^2-m^2||\lambda^2 -m^2-4|}}
	\bigg(1+\frac{2\sqrt{|k|^q}}{1-|k|^q}\bigg)^{1/q},
	\end{equation}
	where $k$ is a unique point in the punctured unit disk $\{k\in\CC\;|\;\; 0<|k|<1\}$ such that $\la^{2}=m^2+2-k-k^{-1}$,
	then $\la\notin\sigma(D_V)$.
\end{Corollary}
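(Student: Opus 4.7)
The plan is to deduce the corollary directly from Theorem~\ref{thm:spec_bound_improved} by replacing the spectral norms $|T_i(k)|$ appearing in $h_q(\la,m)$ with the larger Hilbert--Schmidt norms $|T_i(k)|_{\mathrm{HS}}$ and then bounding those by a simple explicit expression depending only on $\la$, $m$ and $|k|$. Since $h_q$ is monotone increasing in each $|T_i(k)|$ for fixed $k$, it suffices to establish the pointwise estimate $h_q(\la,m)\leq f_q(\la,m)$; the implication $f_q\|V\|_p<1 \;\Rightarrow\; h_q\|V\|_p<1$ then yields the conclusion at once via Theorem~\ref{thm:spec_bound_improved}.

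The crucial observation is that two algebraic identities follow from the defining relation $\la^2=m^2+2-k-k^{-1}$. Expanding $(k+k^{-1})^2-4$ gives
\[
(\la^2-m^2)(\la^2-m^2-4) = (k^{-1}-k)^2,
\]
while factoring $\la^2-m^2 = -k^{-1}(1-k)^2$ gives both $|1-k|^2 = |k|\,|\la^2-m^2|$ and $|1-k^{-1}|^2 = |k|^{-1}|\la^2-m^2|$. Substituting these into the explicit Frobenius expressions for $|T_0(k)|_{\mathrm{HS}}^2$ and $|T_1(k)|_{\mathrm{HS}}^2$ and exploiting $|k|<1$, I expect Lemma~\ref{lem:estim.Green} to provide the uniform estimates
\[
|T_0(k)|_{\mathrm{HS}} \leq \frac{|\la-m|+|\la+m|}{\sqrt{|\la^2-m^2|\,|\la^2-m^2-4|}} \quad\mbox{and}\quad |T_1(k)|_{\mathrm{HS}} \leq |k|^{1/2}\,\frac{|\la-m|+|\la+m|}{\sqrt{|\la^2-m^2|\,|\la^2-m^2-4|}},
\]
where the extra factor $|k|^{1/2}$ is what will eventually generate the $|k|^{q/2}$ inside the parenthesis of $f_q$.

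Plugging both bounds into the definition of $h_q$ and pulling the common prefactor out from under the $q$-th root produces precisely $f_q(\la,m)$; the case $q=\infty$ is handled in parallel by taking the maximum of the two Hilbert--Schmidt bounds and exploiting $|k|^{1/2}<1$. The only mildly delicate point I anticipate is verifying the $|T_1|_{\mathrm{HS}}$ estimate: after clearing denominators and applying $|1-k|^2=|k||\la^2-m^2|$, the required inequality reduces to $-(1-|k|)\bigl[|\la-m|^2+|\la+m|^2+(1+|k|)|\la^2-m^2|\bigr]\leq 0$, which is trivial but requires some careful bookkeeping. Modulo this routine check, the corollary follows mechanically from Theorem~\ref{thm:spec_bound_improved}.
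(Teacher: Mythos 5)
Your proposal is correct and takes exactly the approach the paper intends: the text preceding Corollary~\ref{cor:p-norm} states explicitly that one majorizes $|T_i(k)|$ by $|T_i(k)|_{\mathrm{HS}}$ and then applies Lemma~\ref{lem:estim.Green}, which already contains (via $C_0(k)=1$, $C_1(k)=k^{1/2}$, and the identity $|k^{-1}-k|^2=|\la^2-m^2||\la^2-m^2-4|$) precisely the two uniform bounds you display, so your ``delicate point'' is in fact the computation already carried out in that lemma's proof. The only extraneous remark is the $q=\infty$ case: since $1<p\leq\infty$ forces $q\in[1,\infty)$, that case never arises in the corollary.
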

\begin{Remark}
Note that, if $p\to1$, \ie~$q\to\infty$ in Theorem~\ref{thm:p-norm.stein} and Corollary~\ref{cor:p-norm}, we arrive at the spectral enclosure of Theorem~\ref{thm:1-norm} with the exception of possibly embedded eigenvalues.
\end{Remark}
Stein's interpolation together with the improved spectral 
bound of Theorem~\ref{thm:spec_bound_improved} for the case $p=1$ leads to the following 
improvement of Theorem~\ref{thm:p-norm.stein}.
\begin{Theorem}\label{thm:stein-impr}
Let $1 \leq p\leq\infty$ and assume 
$V\in\ell^p\bigl(\ZZ,\CC^{2\times2}\bigr)$. 
If $\la\notin\sigma(D_0)$ satisfies 
\begin{equation}\label{eq:p-norm_impr}
\psi_q(\la,m) \|V\|_p<1
\end{equation}
then $\la\notin\sigma(D_V)$,
with
\begin{equation}
\psi_q(\la,m):=\left(\max\{|T_{0}(k)|,|T_{1}(k)|\}\right)^{1-\frac{1}{q}}\left(\dist(\lambda,\sigma(D_0))\right)^{-\frac{1}{q}}
\end{equation}
and $k$ the unique point in the punctured unit disk
$\{k\in\CC\;|\;\; 0<|k|<1\}$ such that $\la^{2}=m^2+2-k-k^{-1}$.
The matrices $T_{0}(k)$ and $T_{1}(k)$ are defined
in~\eqref{eq:def_T0_T1} and their norms are given by 
formulas~\eqref{eq:T0_norm} and~\eqref{eq:T1_norm}.
\end{Theorem}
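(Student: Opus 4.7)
The plan is to apply Stein's complex interpolation theorem to an analytic family of Birman--Schwinger-type operators on $\cH = \ell^{2}(\ZZ,\CC^{2})$, following the same strategy as the proof of Theorem~\ref{thm:p-norm.stein} but feeding in the sharper $p=1$ operator-norm bound underlying Theorem~\ref{thm:spec_bound_improved} in place of the cruder bound of Theorem~\ref{thm:1-norm}. The cases $p=1$ and $p=\infty$ will follow directly from Theorem~\ref{thm:spec_bound_improved} (with $q=\infty$) and the trivial resolvent estimate respectively, so the interesting range is $1<p<\infty$. First I would reduce via the Birman--Schwinger principle to showing that $\|K(\lambda)\|_{\cH\to\cH}<1$, where
\[
K(\lambda):=|V|^{1/2}(D_{0}-\lambda)^{-1}U|V|^{1/2}
\]
and $V=U|V|$ is the pointwise polar decomposition; such an inequality forces $-1\notin\sigma(K(\lambda))$, hence $\lambda\notin\sigma(D_{V})$.

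To set up the interpolation I would introduce, on the strip $S:=\{z\in\CC:0\leq\Re z\leq1\}$, the analytic family
\[
T_{z}(\lambda):=|V|^{pz/2}(D_{0}-\lambda)^{-1}U|V|^{pz/2},
\]
where $|V|^{s}:=\bigoplus_{n\in\ZZ}|\ups_{n}|^{s}$ is defined by pointwise spectral calculus (first assuming $V$ has finite support, with a standard truncation/limit argument at the end). The key identity is $T_{1/p}(\lambda)=K(\lambda)$, since $p\cdot(1/p)/2=1/2$. Analyticity and uniform boundedness of $z\mapsto T_{z}(\lambda)$ on $S$ follow from the fact that each $z\mapsto|\ups_{n}|^{pz/2}$ is entire and $\||V|^{pz/2}\|_{\cH\to\cH}\leq\max(1,\|V\|_{\infty}^{p/2})$ uniformly in $z\in S$.

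The two boundary estimates required by Stein's theorem are
\[
\|T_{iy}(\lambda)\|_{\cH\to\cH}\leq\dist(\lambda,\sigma(D_{0}))^{-1}
\]
on $\Re z=0$, immediate from the pointwise unitarity of the multipliers $|V|^{\pm ipy/2}$ together with the standard resolvent bound, and
\[
\|T_{1+iy}(\lambda)\|_{\cH\to\cH}\leq\max\{|T_{0}(k)|,|T_{1}(k)|\}\,\|V\|_{p}^{p}
\]
on $\Re z=1$. I would derive the second by peeling off the commuting unitary factors $|V|^{\pm ipy/2}$ (which do not affect operator norms) and recognising the residual operator $|V|^{p/2}(D_{0}-\lambda)^{-1}U|V|^{p/2}$ as the Birman--Schwinger operator associated with the matrix-valued potential $W:=|V|^{p}U\in\ell^{1}(\ZZ,\CC^{2\times2})$, whose $\ell^{1}$-norm equals $\|V\|_{p}^{p}$, to which the operator-norm estimate underpinning the $p=1$ case of Theorem~\ref{thm:spec_bound_improved} applies.

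Stein's interpolation at parameter $\theta=1/p$, combined with the identity $1-1/p=1/q$, will then yield
\[
\|K(\lambda)\|_{\cH\to\cH}\leq\dist(\lambda,\sigma(D_{0}))^{-1/q}\bigl(\max\{|T_{0}(k)|,|T_{1}(k)|\}\bigr)^{1-1/q}\|V\|_{p}=\psi_{q}(\lambda,m)\|V\|_{p},
\]
and the hypothesis~\eqref{eq:p-norm_impr} will close the argument. The main technical hurdle is the $\Re z=1$ boundary estimate: one has to confirm that the proof of Theorem~\ref{thm:spec_bound_improved} for $p=1$ genuinely delivers the \emph{operator-norm} inequality $\|K_{W}\|_{\cH\to\cH}\leq\max\{|T_{0}|,|T_{1}|\}\|W\|_{1}$ for any $W\in\ell^{1}(\ZZ,\CC^{2\times2})$, rather than merely the implied spectral enclosure of $D_{W}$; equivalently, it can be derived directly from the Hilbert--Schmidt bound using the identity $\sup_{n,n'}|G_{\lambda}(n,n')|=\max\{|T_{0}(k)|,|T_{1}(k)|\}$ read off from the resolvent kernel formula in Section~\ref{Sec.free}. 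Once this is secured, verifying the admissibility of $T_{z}$ for Stein's theorem is routine.
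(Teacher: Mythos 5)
Your proposal is correct and follows essentially the same route as the paper, which proves the theorem by rerunning the Stein-interpolation argument of Theorem~\ref{thm:p-norm.stein} with the operator-norm bound~\eqref{eq:ineq_K_max} replacing~\eqref{estim.BS.case.1} on the boundary $\Re z=1$. Your concern about the $\Re z=1$ endpoint is already resolved in the paper: the displayed estimate~\eqref{eq:ineq_K_max} in the proof of Theorem~\ref{thm:spec_bound_improved} is derived as an operator-norm inequality depending only on $\sup_n|T_n(k)|=\max\{|T_0(k)|,|T_1(k)|\}$ and on $\sum_n|\sqrt{w_n}|^2$, so replacing $\sqrt{w_n}$ by $w_n^{p/2}$ gives exactly the factor $\|V\|_p^p$ you need, with the partial isometries entering only through $|u_n|\leq 1$.
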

\begin{Remark}\label{rem:diagonal.dominance}
 	One may hope that $|T_{0}(k)|\geq |T_{1}(k)|$ which would mean that 
	the resolvent operator $(D_{0}-\lambda)^{-1}$ is diagonally dominant, see formula~\eqref{eq:res_D0} given below. This would turn the spectral enclosure of Theorem~\ref{thm:spec_bound_improved} (especially in the case of $\ell^{1}$-potentials) into a reasonably simple form. Unfortunately, the inequality $|T_{0}(k)|\geq |T_{1}(k)|$ does \emph{not} hold in general. This can be verified analytically for $m=0$ and therefore the inequality remains false for $m$ small by continuity. Moreover, the dependence of the relation between the values of $|T_{0}(k)|$ and $|T_{1}(k)|$ on the parameter~$k$ seems to be nontrivial, see Figure~\ref{fig:T01ineq} in the Appendix.
\end{Remark}

Similarly as in Theorem~\ref{thm:1-norm}, the spectral enclosures from Theorems~\ref{thm:p-norm.stein}, \ref{thm:spec_bound_improved}, and~\ref{thm:stein-impr} are symmetric with respect to the real as well as the imaginary axes. On the other hand, if $p>1$, these enclosures always contain the entire essential spectrum of $D_V$ for any choice of the potential $V\in\ell^p\bigl(\ZZ,\CC^{2\times2}\bigr)$ and $m>0$. Illustrative plots as well as comparisons of the obtained results are given in the Appendix.

\subsection{Organization of the paper}

As preliminary results for our proofs, in Sections~\ref{Sec.free} and~\ref{Sec.BS}
we recall the resolvent of the free discrete Dirac operator~$D_{0}$
and develop the Birman--Schwinger principle for the operator~$D_{V}$. The proofs of Theorems~\ref{thm:1-norm}-\ref{thm:stein-impr} are presented in Section~\ref{Sec.proofs}.
In Section~\ref{sec:optim}, the optimality of the improved spectral enclosure for $\ell^{1}$-potentials from Theorem~\ref{thm:spec_bound_improved} is discussed.

The paper is concluded by four appendices.
In Appendix~A we numerically visualise the spectral enclosure of Theorem~\ref{thm:p-norm.stein} for several choices of $p>1$. Several comparison plots as well as an illustration of the partial optimality proved for the spectral enclosure from Theorem~\ref{thm:spec_bound_improved} for $p=1$ are given in the parts~B and~C of the Appendix. Finally, Appendix~D serves as a numerical illustration of Remark~\ref{rem:diagonal.dominance}.

\section{The free resolvent}\label{Sec.free}
%
Making use of the observation that 
\[
  D_0^{2}=
	\begin{pmatrix}
	m^{2}+dd^{*} & 0 \\[0.5ex]
	0 & m^{2}+d^{*}d 
	\end{pmatrix} 
\]
together with the familiar formula for the resolvent of the 
discrete Laplacian $dd^{*}=d^{*}d$ (see \eg~\cite[Chp.~1]{teschl00} 
or~\cite[Eq.~(2.2)]{Ibr-Sta-19}), the resolvent of the free Dirac 
operator can be expressed fully explicitly. 
Using the $2\times2$-block matrix representation as 
in~\eqref{free.dirac.matrix}, the resulting formula for the 
resolvent of $D_{0}$ can be written as the $2\times 2$-block 
Laurent matrix
\begin{equation}\label{eq:res_D0}
 \left(D_0-\la\right)^{-1}=\begin{pmatrix}
	 & \vdots & \vdots &  \vdots  & \vdots & \vdots  &   \\
	\dots &  T_{-1}(k)     & T_{0}(k) & T_{1}(k) & T_{2}(k) & T_{3}(k) & \dots      \\
	\dots &  T_{-2}(k)     & T_{-1}(k) & T_{0}(k) & T_{1}(k) & T_{2}(k) & \dots      \\
	\dots &  T_{-3}(k)     & T_{-2}(k) & T_{-1}(k) & T_{0}(k) & T_{1}(k) & \dots      \\
	& \vdots & \vdots &  \vdots  & \vdots & \vdots  &   \\
	\end{pmatrix},
\end{equation}
where $T_{0}(k)$ and $T_{1}(k)$ are defined by~\eqref{eq:def_T0_T1} and 
\begin{equation}\label{Ti}
 T_{i}(k)=T_{-i}^{T}(k)=k^{i-1}T_{1}(k), \;\,\mbox{ for } i\geq1.
\end{equation}
Here $0<|k|<1$ and the spectral parameter~$\la$ is related to $k$ by the equation~\eqref{Jouk}
which determines a one-to-two mapping $k\mapsto\la(k)$ between the punctured 
unit disk $\{k\in\CC\;|\;\; 0<|k|<1\}$ and the resolvent set 
of~$D_{0}$.

For later purpose, we will need the following estimate of the Hilbert--Schmidt norm of the resolvent entries $T_{i}(k)$, $i\in\ZZ$.
\begin{Lemma}\label{lem:estim.Green}
Let $\la\in\rho(D_0)$ and $k$ be such that $0<|k|<1$ related to $\lambda$ via~\eqref{Jouk}. Then one has 
\begin{equation}\label{entrywise.estim}
	|T_j(k)|_{\mathrm{HS}} 
	\leq 
	\frac{|C_j(k)|}{|k^{-1}-k|}\bigl(|\la-m|+|\la+m|\bigr), \quad j\in\ZZ,
\end{equation}
where 
\begin{equation}
	C_j(k)=\begin{cases}
	1 & \mbox{ if } \quad j=0,\\
	k^{|j|-1/2} & \mbox{ if } \quad j\neq0.
	\end{cases}
\end{equation}
%
\end{Lemma}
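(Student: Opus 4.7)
The plan is to reduce the estimate to the cases $j\in\{0,1\}$ and then verify it by direct computation, where the key ingredient is the dispersion identity \eqref{Jouk}. Since the Hilbert--Schmidt norm is invariant under transposition, the symmetry $T_{-j}(k)=T_j(k)^T$ from \eqref{Ti} restricts attention to $j\geq 0$. Moreover, for $j\geq 1$ the relation $T_j(k) = k^{j-1}T_1(k)$ gives $|T_j(k)|_{\mathrm{HS}} = |k|^{j-1}|T_1(k)|_{\mathrm{HS}}$, and since $|C_j(k)| = |k|^{j-1/2}$ for $j\geq 1$, this scaling is exactly compatible with the bound being proved. Hence it suffices to establish the estimate for $j=0$ and $j=1$.

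For these two cases, I compute the Hilbert--Schmidt norms directly from the explicit matrix forms in \eqref{eq:def_T0_T1}. Using $|1-k^{-1}|^2 = |k|^{-2}|1-k|^2$, one obtains
\[
|T_0(k)|^2_{\mathrm{HS}} = \frac{|\lambda-m|^2 + |\lambda+m|^2 + 2|1-k|^2}{|k^{-1}-k|^2}
\]
and
\[
|T_1(k)|^2_{\mathrm{HS}} = \frac{|k|^2\bigl(|\lambda-m|^2 + |\lambda+m|^2\bigr) + (1+|k|^2)|1-k|^2}{|k^{-1}-k|^2}.
\]
The decisive ingredient is the dispersion identity: rewriting \eqref{Jouk} as $\lambda^2 - m^2 = 2 - k - k^{-1} = -k^{-1}(1-k)^2$ yields $|\lambda-m|\,|\lambda+m| = |k|^{-1}|1-k|^2$, so that
\[
(|\lambda-m|+|\lambda+m|)^2 = |\lambda-m|^2+|\lambda+m|^2+2|k|^{-1}|1-k|^2.
\]

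For $j=0$ this immediately reduces the claim to $2|1-k|^2 \leq 2|k|^{-1}|1-k|^2$, i.e., to $|k|\leq 1$. For $j=1$, after multiplying out, the difference between the right- and left-hand sides of the squared inequality equals, up to the common positive factor $|k^{-1}-k|^{-2}$,
\[
(1-|k|)\Bigl(|k|\bigl(|\lambda-m|^2+|\lambda+m|^2\bigr)+(1+|k|)|1-k|^2\Bigr),
\]
which is nonnegative since $|k|<1$. I do not anticipate any significant obstacle: the dispersion identity does all the work, and the remaining calculation is routine algebra. The only technical point is recognising that $2-k-k^{-1}=-k^{-1}(1-k)^2$, which converts $|\lambda^2-m^2|$ into the perfect square $|k|^{-1}|1-k|^2$ matching the $|1-k|^2$ contributions appearing in the Hilbert--Schmidt norms.
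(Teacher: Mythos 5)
Your proposal is correct and follows essentially the same route as the paper: explicit computation of the Hilbert--Schmidt norms of $T_0(k)$ and $T_1(k)$, the Joukowski identity to convert $|\lambda^2-m^2|$ into $|k|^{-1}|1-k|^2$ (the paper writes this as $|k^{-1/2}-k^{1/2}|^2$), and elementary inequalities using $|k|<1$. The only cosmetic difference is that you reduce to $j\in\{0,1\}$ via the scaling $T_j(k)=k^{j-1}T_1(k)$ while the paper writes the HS norm for general $j\geq 1$ with the $|k|^{2j-1}$ prefactor pulled out explicitly — the same observation in slightly different packaging.
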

\begin{proof}
By a straightforward computation, one gets
\begin{equation*}\label{T0.HS.norm}
	 |T_{0}(k)|_{\mathrm{HS}}^{2}=\frac{1}{|k^{-1}-k|^{2}}
	 \left(|\lambda-m|^{2}+|\lambda+m|^{2}+2|k|\left|k^{-1/2}-k^{1/2}\right|^{2}\right)
\end{equation*}
and
\begin{equation*}\label{Ti.HS.norm}
	\begin{aligned}
	 |T_{j}(k)|_{\mathrm{HS}}^{2}&=|T_{-j}(k)|_{\mathrm{HS}}^{2} \\
	 &=\frac{|k|^{2j-1}}{|k^{-1}-k|^{2}}\left(|k||\lambda-m|^{2}+|k||\lambda+m|^{2}
		 +(1+|k|^{2})\left|k^{-1/2}-k^{1/2}\right|^{2}\right),
	\end{aligned}
\end{equation*}
for $j\geq1$. To arrive at~\eqref{entrywise.estim}, it suffices 
to note that, for $0<|k|<1$, one has
\[
 2|k|\left|k^{-1/2}-k^{1/2}\right|^{2}\leq(1+|k|^{2})
 \left|k^{-1/2}-k^{1/2}\right|^{2}\leq2\left|k^{-1/2}-k^{1/2}\right|^{2}
\]
and use the equality
\[
 \left|k^{-1/2}-k^{1/2}\right|^{2}=|\la^2-m^2|
\]
which follows readily from~\eqref{Jouk}.
\end{proof}

\section{The Birman--Schwinger principle}\label{Sec.BS}
%
For $n\in\ZZ$, we denote by $w_n$ the absolute value of~$\ups_n$, 
\ie~$w_n:=\sqrt{\ups^*_n\ups_n}$. 
Using the polar decomposition of matrices, we have  
$
\ups_n=u_nw_n,
$
where $u_n\in\CC^{2\times2}$ is a partial isometry. Notice that 
$|\ups_n|=|w_n|=|\sqrt{w_n}|^2$.
Further, let us denote by $U$ and $W$ the $2\times2$-block diagonal 
matrices with the diagonal block 
entries $u_n$ and $w_n$, respectively, \ie
\[
 U=\bigoplus_{n\in\ZZ}u_{n} \quad\mbox{ and }\quad W=\bigoplus_{n\in\ZZ}w_{n}.
\]
Then $U$ is a partial isometry and we have
\[
V=UW=U\sqrt{W}\sqrt{W},
\]
where $\sqrt{W}$ is the square root of the positive operator~$W$.

Given any $\la\in\rho(D_0)$, we introduce the 
\emph{Birman--Schwinger operator}
\begin{equation}\label{BS.op}
K(\la) := \sqrt{W} \, (D_0-\la)^{-1} \, U\sqrt{W}
\end{equation}
and recall the conventional Birman--Schwinger principle
\begin{equation}\label{BS.disc.EV}
\la\in\sigma(D_V) \quad 
\Longleftrightarrow 
\quad -1\in\sigma(K(\la))
\end{equation}
which can be easily justified by usual arguments 
if, for example, $V$ is bounded.

The next lemma resembles a one-sided 
version of the Birman--Schwinger principle extended 
to possibly embedded eigenvalues. The strategy of the proof we 
provide below is inspired by the ones of analogous results 
in~\cite{Ibr-Kre-Lap, Fan-Kre-LMP19, Fan-Kre-Veg-JST18, Fra-Sim-JST-17}. We recall the notation 
$\cH=\ell^{2}(\ZZ,\CC^{2})$.

\begin{Lemma}\label{Lem.BS}
Let $V\in\ell^1\bigl(\ZZ,\CC^{2\times2}\bigr)$ and let 
$\la\in\sigma(D_0)\setminus\{\pm m, \pm\sqrt{m^2+4}\}
$ 
be such that 
$D_V\psi=\la\psi$ for some $\psi\in\cH$. Then 
$\phi:=\sqrt{W}\psi\in\cH$ and, for all $\varphi\in\cH$, we have
\begin{equation}\label{BS}
	\lim_{\eps \to 0^+} 
	(\varphi,K(\la+\ii\eps)\phi)_{\cH}=-(\varphi,\phi)_{\cH}.
\end{equation}
\end{Lemma}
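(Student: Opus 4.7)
The plan is to convert the eigenvalue equation $D_V\psi=\la\psi$ into an approximate identity for $K(\la+\ii\eps)\phi$ via a resolvent manipulation, and then pass to the limit $\eps\to0^{+}$ after pairing with $\varphi$. The hypothesis $V\in\ell^{1}\subset\ell^{\infty}$ ensures that $\sqrt{W}$ is a bounded self-adjoint multiplication operator on $\cH$, so $\phi:=\sqrt{W}\psi$ lies in $\cH$ immediately.

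For the main computation, I would start from $V\psi=(D_V-D_0)\psi=-(D_0-\la)\psi$ and insert the polar decomposition $V=U\sqrt{W}\sqrt{W}$ into the definition of $K$:
\[
K(\la+\ii\eps)\phi=\sqrt{W}\,(D_{0}-\la-\ii\eps)^{-1}V\psi
=-\sqrt{W}\,(D_{0}-\la-\ii\eps)^{-1}(D_{0}-\la)\psi.
\]
The elementary identity $(D_{0}-\la-\ii\eps)^{-1}(D_{0}-\la)=I+\ii\eps\,(D_{0}-\la-\ii\eps)^{-1}$ then gives
\[
-K(\la+\ii\eps)\phi=\phi+\ii\eps\sqrt{W}\,(D_{0}-\la-\ii\eps)^{-1}\psi,
\]
whence, using self-adjointness of $\sqrt{W}$,
\[
-(\varphi,K(\la+\ii\eps)\phi)_{\cH}=(\varphi,\phi)_{\cH}+\bigl(\sqrt{W}\varphi,\,\ii\eps(D_{0}-\la-\ii\eps)^{-1}\psi\bigr)_{\cH}.
\]

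The remaining task is to verify that the error term vanishes in the limit $\eps\to0^{+}$. By Cauchy--Schwarz it suffices to show $\bigl\|\ii\eps(D_{0}-\la-\ii\eps)^{-1}\psi\bigr\|_{\cH}\to0$. Invoking the spectral theorem for the bounded self-adjoint operator $D_{0}$, one writes
\[
\bigl\|\ii\eps(D_{0}-\la-\ii\eps)^{-1}\psi\bigr\|_{\cH}^{2}=\int_{\sigma(D_{0})}\frac{\eps^{2}}{(\mu-\la)^{2}+\eps^{2}}\,d\|E(\mu)\psi\|^{2},
\]
and the dominated convergence theorem sends the right-hand side to $\|E(\{\la\})\psi\|^{2}$. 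Since $\sigma(D_{0})$ is purely absolutely continuous by~\eqref{spec.H0}, the point mass at $\la$ vanishes and we obtain the desired limit.

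The only substantive obstacle is this last step: the prefactor $\ii\eps$ together with the singularity $(D_{0}-\la-\ii\eps)^{-1}$ combine to an operator of unit norm, so a crude norm bound cannot close the argument and one must exploit the spectral regularity of $D_{0}$ at the embedded value~$\la$. Everything else reduces to bookkeeping around the Birman--Schwinger rewriting, in the spirit of the references cited just before the statement of the lemma.
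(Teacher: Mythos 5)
Your proposal is correct, and the opening reduction (rewriting $V\psi=-(D_0-\la)\psi$, then splitting $(D_0-\la-\ii\eps)^{-1}(D_0-\la)=I+\ii\eps(D_0-\la-\ii\eps)^{-1}$ to isolate the error term) coincides with the paper's argument. The difference is in how the error term is killed. The paper keeps $\sqrt{W}$ glued to the resolvent, sets $M(\eps)=\sqrt{W}(D_0-\la-\ii\eps)^{-1}$, and proves the \emph{operator-norm} estimate $\eps\|M(\eps)\|=\mathcal{O}(\eps^{1/2})$ by bounding $\|M(\eps)\|_{\mathrm{HS}}$ with the explicit Hilbert--Schmidt estimates of Lemma~\ref{lem:estim.Green} for the blocks $T_j(k)$; this is where the $\ell^1$-hypothesis on $V$ and the restriction $\la\in(-\sqrt{m^2+4},-m)\cup(m,\sqrt{m^2+4})$ enter (the latter controls how $k(\eps)$ approaches the unit circle). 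You instead absorb $\sqrt{W}$ into the test vector $\varphi$ via its boundedness, and prove the \emph{strong} convergence $\|\eps(D_0-\la-\ii\eps)^{-1}\psi\|\to0$ using the spectral theorem for $D_0$, dominated convergence, and the absolute continuity of $\sigma(D_0)$ to rule out a point mass at $\la$. Both arguments close the gap; yours is softer and more portable (it needs only boundedness of $\sqrt{W}$ and the absence of a point mass for $D_0$ at $\la$, and would work verbatim for any bounded self-adjoint $D_0$ with no eigenvalue at $\la$), whereas the paper's is quantitative ($\eps^{1/2}$ decay rate) and stays inside the explicit resolvent machinery developed for the other proofs. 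One small remark: moving $\sqrt{W}$ onto $\varphi$ by self-adjointness is harmless but unnecessary — Cauchy--Schwarz with $\|M(\eps)\psi\|\le\|\sqrt{W}\|\,\|\eps(D_0-\la-\ii\eps)^{-1}\psi\|$ works just as well and avoids touching $\varphi$.
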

\begin{proof}
It is not difficult to check that
\begin{equation}
	\|\sqrt{W}\|_{\cH\to\cH} \leq \sqrt{\|V\|_1}.
\end{equation}
Hence $\phi=\sqrt{W}\psi\in\cH$.

Let $\varphi\in\cH$ be fixed and $\eps>0$ be arbitrary.
Then $\la + \ii\eps \notin\sigma(D_0)$ and we have
\begin{equation*}
\begin{aligned}
	\bigl(\varphi,K(\la+\ii\eps)\phi\bigr)_{\cH} 
	&=\bigl(\varphi, \sqrt{W}(D_0-\la-\ii\eps)^{-1}V\psi\bigr)_{\cH}\\ 
	&=\bigl(\varphi, \sqrt{W}(D_0-\la-\ii\eps)^{-1}\bigl(-(D_0-\la-\ii\eps)\psi-\ii\eps\psi\bigr)\bigr)_{\cH}\\ 
	&=-\bigl(\varphi, \phi\bigr)_{\cH}-\ii\eps\bigl(\varphi, \sqrt{W}(D_0-\la-\ii\eps)^{-1}\psi\bigr)_{\cH}.
\end{aligned}
\end{equation*}
Further, denoting $M(\eps):=\sqrt{W}(D_0-\la-\ii\eps)^{-1}$
and employing the Cauchy--Schwarz inequality, we observe 
\begin{equation*}
	\Bigl|\bigl(\varphi, \sqrt{W}(D_0-\la-\ii\eps)^{-1}\psi\bigr)_{\cH}\Bigr|
	\leq \|\varphi\|_{\cH} \, \|M(\eps)\|\, \|\psi\|_{\cH}
	\,.
\end{equation*}
In the remaining part of the proof, we show that $\eps\|M(\eps)\|\to0$ as 
$\eps \to 0^+$ from which the claim will follow.
Let $k=k(\eps)$ be the unique point inside the unit disk corresponding to 
$\la+\ii\eps$ via~\eqref{Jouk}, where $\la$ is replaced by $\la+\ii\eps$. 
Applying Lemma~\ref{lem:estim.Green}, we get the estimate
\begin{align*}
 \left|k-k^{-1}\right|^2\sum_{i\in\ZZ}|T_{i}(k)|_{HS}^{2}&=
 \left|k-k^{-1}\right|^2\left(|T_{0}(k)|_{\text{HS}}^{2}+2\sum_{i=1}^{\infty}|T_{i}(k)|_{\text{HS}}^{2}\right)\\
 &\leq\bigl(|\la+\ii\eps-m|+|\la+\ii\eps+m|\bigr)^2\left(1+2\sum_{i=1}^{\infty}|k|^{2i-1}\right)\\
 &\leq\frac{2}{1-|k|^{2}}\bigl(|\la+\ii\eps-m|+|\la+\ii\eps+m|\bigr)^2.
\end{align*}
Therefore, 
\begin{equation*}
	\begin{aligned}
	\|M(\eps)\|^2 \leq \|M(\eps)\|_\mathrm{HS}^2
	&=\sum_{i\in\ZZ}\sum_{j\in\ZZ}\bigl|\sqrt{w_i}T_{j-i}(k)\bigr|_\mathrm{HS}^2
	\leq\sum_{i\in\ZZ}\sum_{j\in\ZZ}\bigl|\sqrt{w_i}\bigr|^2\bigl|T_{j-i}(k)\bigr|_\mathrm{HS}^2\\
	&=\sum_{i\in\ZZ}|\ups_i|\sum_{j\in\ZZ}\bigl|T_{j}(k)\bigr|_\mathrm{HS}^2\\
	&\leq \frac{2}{|k-k^{-1}|^2}\frac{\|V\|_1}{1-|k|^2}\bigl(|\la+\ii\eps-m|+|\la+\ii\eps+m|\bigr)^2.
	\end{aligned}
\end{equation*}
For $\la\in(-\sqrt{m^2+4},-m)\cup(m,\sqrt{m^2+4})$, elementary calculations show that
\begin{equation*}
	\frac{1}{|k-k^{-1}|^2}\frac{1}{1-|k|^2}
	=\mathcal{O}(\eps^{-1})
	\quad \mbox{as} \quad \eps\to0^+.
\end{equation*}
Hence, $\eps\|M(\eps)\|$ decays at least as $\mathcal{O}(\eps^{1/2})$ 
for $\eps\to0^+$.
\end{proof}
%
\section{Proofs}\label{Sec.proofs}
%
\subsection{Proof of Theorem~\ref{thm:1-norm}}
First we consider the case $\la\notin[-\sqrt{m^2+4},-m]\cup[m,\sqrt{m^2+4}]$.
Let $k\in\CC$ denote the point inside the punctured unit disk determined by $\lambda$ via~\eqref{Jouk}.

%

For $V\in\ell^1\bigl(\ZZ,\CC^{2\times2}\bigr)$, the 
Birman--Schwinger operator $K(\la)$ is Hilbert--Schmidt.
To estimate its Hilbert--Schmidt norm we use the general inequality
\[
 \|ABC\|_{\rm{HS}}\leq\|A\|\|B\|_{\rm{HS}}\|C\|,
\]
which holds true for any Hilbert-Schmidt operator $B$ and bounded operators $A,C$; see, \eg~\cite[Prop.~IV.2.3]{Goh-Gol-Kru00}. 
Moreover, using that $|C_{j}(k|)\leq1$ and
\begin{equation}
 \left|k-k^{-1}\right|^2=\left|\la^2-m^2\right|\left|\la^2-m^2-4\right|
\label{eq:k_minus_k_recip_in_lam}
\end{equation}
by~\eqref{Jouk}, in Lemma~\ref{lem:estim.Green}, we obtain
\[
 	\bigl|T_{i}(k)\bigr|^2_\mathrm{HS}
	\leq
	\frac{(|\la-m|+|\la+m|)^{2}}{|\la^2-m^2-4||\la^2-m^2|}.
\]
Now, we may estimate the Hilbert--Schmidt norm of $K(\la)$ as follows:
\begin{equation*}
\begin{aligned}
	\|K(\la)\|^2_{\mathrm{HS}}
	&=\sum_{i\in\ZZ}\sum_{j\in\ZZ}\left|\sqrt{w_i}T_{j-i}(k)u_j\sqrt{w_j}\right|_{\mathrm{HS}}^{2}\\ 
	&\leq\sum_{i\in\ZZ}\sum_{j\in\ZZ}\left|\sqrt{w_i}\right|^{2}\left|T_{j-i}(k)\right|_{\mathrm{HS}}^{2}\left|\sqrt{w_j}\right|^{2}\\ 
	&=\sum_{i\in\ZZ}\sum_{j\in\ZZ}|\ups_i|\left|T_{j-i}(k)\right|_{\mathrm{HS}}^{2}|\ups_j|\\
	&\leq\|V\|^2_{1}\frac{(|\la-m|+|\la+m|)^{2}}{|\la^2-m^2-4||\la^2-m^2|}.
\end{aligned}
\end{equation*}
Therefore, 
\begin{equation}\label{estim.BS.case.1}
	\|K(\la)\|\leq\|K(\la)\|_\mathrm{HS}\leq\|V\|_{1}\frac{|\la-m|+|\la+m|}{\sqrt{|\la^2-m^2-4||\la^2-m^2|}}.
\end{equation}
and the Birman--Schwinger principle \eqref{BS.disc.EV} implies that 
$\la$ cannot belong to the point spectrum of $H_V$ unless 
it holds that 
\begin{equation}\label{EV.bd.prf}
	|\la^2-m^2||\la^2-m^2-4|\leq \|V\|^{2}_{1}(|\la-m|+|\la+m|)^{2}.
\end{equation}

Now we consider the case $\la\in(-\sqrt{m^2+4},-m) \cup (m,\sqrt{m^2+4})$.
Let $\eps>0$ be arbitrary.
Since $\la+\ii\eps\notin[-\sqrt{m^2+4},-m] \cup [m,\sqrt{m^2+4}]$, we can 
apply \eqref{estim.BS.case.1}	
and deduce
\begin{equation}\label{estim.BS.case.2}
	\|K(\la+\ii\eps)\| \leq \|V\|_1\frac{|\la+\ii\eps-m|+|\la+\ii\eps+m|}{\sqrt{|(\la+\ii\eps)^2-m^2-4||(\la+\ii\eps)^2-m^2|}}.
\end{equation}
On the other hand, if $\la\in\sigma_{\rm{p}}(D_V)$ with an eigenvector 
$\psi\in\cH$, then we can invoke Lemma~\ref{Lem.BS} and apply \eqref{BS} with 
$\varphi=\phi=W^{1/2}\psi$. Taking the 
limit~$\eps\to0^+$, we thus obtain 
\begin{equation}\label{BS.2.imp}
	\|\varphi\|^2
	\leq 
	\liminf_{\eps\to 0^+} \|K(\la+\ii\eps)\| \|\varphi\|^2
	\,.
\end{equation}
However, it is not difficult to see that $\varphi=W^{1/2}\psi\neq0$ 
(otherwise, $\la$ would be an eigenvalue 
of~$D_0$ which is impossible). Hence, we deduce 
from \eqref{BS.2.imp} that $\liminf_{\eps\to 0^+} \|K(\la+\ii\eps)\|\geq1$. 
Therefore, letting $\eps\to0^+$ in \eqref{estim.BS.case.2}, we
conclude
\begin{equation}\label{eq:not.improvable}
  1\leq \liminf_{\eps\to 0^+} \|K(\la+\ii\eps)\|
  \leq
  \|V\|_{1}\frac{|\la-m|+|\la+m|}{\sqrt{|\la^2-m^2-4||\la^2-m^2|}}.
\end{equation}
\ie~also embedded eigenvalues must obey the estimate~\eqref{EV.bd.prf}.

Finally, since the endpoints $\lambda\in\{\pm\sqrt{m^{2}+4},\pm m\}$ are involved in the set on the right-hand side of~\eqref{sp.enc.l1} the proof is completed.
\qed

\subsection{Proof of Theorem~\ref{thm:p-norm.stein}}
The proof is based on the following special variant of Stein's complex interpolation theorem, see~\cite[Thm.~1]{Ste-TAMS-1956}. 
\begin{Lemma}[Stein's interpolation]
	\label{lem:stein}
Let $T_{z}:\ell^{2}(\ZZ;\CC^{2\times2})\to\ell^{2}(\ZZ;\CC^{2\times2})$ 
be a family of operators analytic in the strip $0<\Re z<1$ and 
continuous and uniformly bounded in its closure $0\leq\Re z\leq1$. 
Suppose further that there exist constants $C_{0}$ and $C_{1}$ such that
	\[
	\|T_{\ii y}\|\leq C_{0} \quad\mbox{ and }\quad \|T_{1+\ii y}\|\leq C_{1},
	\]
for all $y\in\RR$. Then, for any $\theta\in[0,1]$, one has
	\[
	\|T_{\theta}\|\leq C_{0}^{1-\theta}C_{1}^{\theta}.
	\]
\end{Lemma}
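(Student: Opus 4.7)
The plan is to reduce the operator-norm estimate to a scalar estimate on sesquilinear forms and then apply the Hadamard three-lines theorem. Since the domain and codomain Hilbert spaces coincide with the fixed space $\ell^{2}(\ZZ;\CC^{2\times2})$, no $L^{p}$-interpolation machinery of the full Stein theorem is actually required. First I would pick arbitrary unit vectors $u,v\in\ell^{2}(\ZZ;\CC^{2\times2})$ and consider
\[
F(z):=(T_{z}u,v)_{\ell^{2}},\qquad 0\leq\Re z\leq 1.
\]
The hypothesis that $z\mapsto T_{z}$ is analytic in the open strip and continuous and uniformly bounded on its closure ensures that $F$ is holomorphic in $0<\Re z<1$, continuous on $0\leq\Re z\leq1$, and bounded there by $M:=\sup_{0\leq\Re z\leq1}\|T_{z}\|<\infty$ via the Cauchy--Schwarz inequality.

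Assuming first that $C_{0},C_{1}>0$, I would introduce the auxiliary function
\[
G(z):=C_{0}^{z-1}C_{1}^{-z}F(z),
\]
which remains holomorphic in the open strip, continuous on its closure, and bounded there, since $|C_{0}^{z-1}C_{1}^{-z}|$ depends only on $\Re z$ and is bounded on $[0,1]$. On the left edge of the strip, the bound $\|T_{\ii y}\|\leq C_{0}$ combined with Cauchy--Schwarz yields $|G(\ii y)|\leq 1$, and analogously the bound $\|T_{1+\ii y}\|\leq C_{1}$ gives $|G(1+\ii y)|\leq 1$. The Hadamard three-lines theorem then produces $|G(\theta)|\leq 1$ for every $\theta\in[0,1]$, which translates into $|(T_{\theta}u,v)_{\ell^{2}}|\leq C_{0}^{1-\theta}C_{1}^{\theta}$. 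Taking the supremum over unit vectors $u,v$ delivers the desired operator-norm bound.

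Finally, the degenerate case $C_{0}=0$ or $C_{1}=0$ can be handled by replacing the vanishing constant with an arbitrary $\eps>0$ in the argument above (which only uses the trivially weaker bound), deriving $\|T_{\theta}\|\leq\eps^{1-\theta}C_{1}^{\theta}$ (or its symmetric counterpart), and letting $\eps\to 0^{+}$. The main technical point to verify with some care is the applicability of Hadamard's three-lines theorem to $G$: because uniform boundedness is already imposed on the whole closed strip, no Phragm\'en--Lindel\"of-style corrector of the form $e^{\eps z^{2}}$ is needed, which is precisely the feature that makes this specialised form of Stein's theorem considerably simpler than its general version, where only admissible growth on the interior of the strip is assumed.
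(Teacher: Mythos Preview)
Your argument is correct. The reduction to the scalar function $F(z)=(T_{z}u,v)$, the rescaling by $C_{0}^{z-1}C_{1}^{-z}$, and the application of Hadamard's three-lines theorem all go through exactly as you describe; the uniform boundedness assumption on the closed strip indeed makes the Phragm\'en--Lindel\"of corrector unnecessary, and your treatment of the degenerate case $C_{0}=0$ or $C_{1}=0$ is fine.

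As for comparison: the paper does not actually prove this lemma. It is stated as a special variant of Stein's interpolation theorem and simply referred to \cite[Thm.~1]{Ste-TAMS-1956}, so there is no ``paper's own proof'' to compare against. Your self-contained proof via Hadamard three-lines is the natural one in this simplified setting where both the domain and target space are a single fixed Hilbert space, and it is more elementary than invoking the full Stein machinery.
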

\begin{proof}[Proof of Theorem~\ref{thm:p-norm.stein}]
Let 
$\la\notin\sigma(D_0)$ be fixed. If $p=\infty$, one has the trivial estimate for the Birman--Schwinger operator 
	\[
	\|K(\la)\|\leq\|\sqrt{W}\|\|(D_{0}-\la)^{-1}\|\|\sqrt{W}\|\leq\frac{\|V\|_{\infty}}{\dist(\la,\sigma(D_0))}.
	\]
	Then~\eqref{BS.disc.EV} implies~\eqref{eq:p-norm.2} in the particular case $p=\infty$.

	For the case $1<p<\infty$, we consider the operator family 
	\[
	T_{z}:=W^{z p/2}(D_0-\la)^{-1}W^{z p/2},
	\]
	for $z\in\CC$ with $0\leq\Re z\leq 1$. Note that $T_{z}$ is continuous in the closed strip $0\leq\Re z\leq 1$ and analytic in its interior. Moreover, $T_{z}$ is uniformly bounded for $0\leq\Re z\leq 1$ as one has 
	\[
	\sup_{0\leq\Re z\leq 1}\|T_{z}\|\leq\frac{\max(1,\|V\|_{\infty}^{p})}{\dist(\la,\sigma(D_0))}.
	\]
	Further, since $V\in\ell^p(\ZZ,\CC^{2\times2})$ by the hypothesis, we can apply~\eqref{estim.BS.case.1} to get
	\[
	\|T_{1+\ii y}\|\leq\|W^{p/2}(D_0-\la)^{-1}W^{p/2}\| \leq 
	\frac{|\la-m|+|\la+m|}{\sqrt{\left|\la^2-m^2\right|\left|\la^2-m^2-4\right|}}\|V\|^p_p,
	\]
	for any $y\in\RR$. Moreover, for all $y\in\RR$, we have also the estimate
	\[
	\|T_{\ii y}\|\leq \frac{1}{\dist(\la,\sigma(D_0))}.
	\]
	Thus, one can apply Theorem~\ref{lem:stein} with $\theta=1/p$ which implies
	\begin{equation*}
	\|K(\la)\|\leq\|T_{1/p}\| \leq 
	\frac{(|\la-m|+|\la+m|)^{\frac{1}{p}}\|V\|_p}{|\lambda^2-m^2|^{\frac{1}{2p}}{|\lambda^2 -m^2-4|}^{\frac{1}{2p}}}
	\frac{1}{(\dist(\la, \sigma(D_0)))^{1-\frac{1}{p}}}
	\end{equation*}
	and the claim immediately follows from the Birman--Schwinger principle~\eqref{BS.disc.EV}.
\end{proof}
\subsection{Proof of Theorem~\ref{thm:spec_bound_improved}}
In the proof of Theorem~\ref{thm:spec_bound_improved}, will need a discrete version of Young's inequality.

\begin{Lemma}[Young's inequality]\label{lem:young}
	Let $p,q,r \geq 1$ be such that  
	\[
	\frac{1}{p}+\frac{1}{q}+\frac{1}{r}=2.
	\]
	Then, for any $f\in\ell^{p}(\ZZ)$, $g\in\ell^{q}(\ZZ)$, and $h\in\ell^{r}(\ZZ)$, one has
	\begin{equation}
	\sum_{i,j\in\ZZ}|f_{i}||g_{j-i}||h_{j}|\leq \|f\|_{p}\|g\|_{q}\|h\|_{r}.
	\label{eq:young}
	\end{equation}
	Moreover, the inequality~\eqref{eq:young} is sharp.
\end{Lemma}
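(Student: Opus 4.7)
The plan is to prove the inequality by a classical Hölder-type splitting argument, exploiting the fact that the relation $\tfrac{1}{p}+\tfrac{1}{q}+\tfrac{1}{r}=2$ can be rewritten in terms of the conjugate exponents as $\tfrac{1}{p'}+\tfrac{1}{q'}+\tfrac{1}{r'}=1$, which opens the door to the three-factor (generalized) Hölder inequality. Without loss of generality we may assume $f,g,h$ are non-negative, and we may also assume that $\|f\|_p,\|g\|_q,\|h\|_r$ are all finite, otherwise the inequality is trivial.

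The main step is to write
\[
f_i\, g_{j-i}\, h_j = \bigl(f_i^{\,p} g_{j-i}^{\,q}\bigr)^{1/r'} \bigl(g_{j-i}^{\,q} h_j^{\,r}\bigr)^{1/p'} \bigl(f_i^{\,p} h_j^{\,r}\bigr)^{1/q'}.
\]
A direct check using $\tfrac{1}{p'}+\tfrac{1}{q'}+\tfrac{1}{r'}=1$ (equivalently, $\tfrac{1}{q'}+\tfrac{1}{r'}=\tfrac{1}{p}$, and cyclically) shows that the exponents of $f_i$, $g_{j-i}$ and $h_j$ on the right-hand side all equal~$1$, so the identity is correct. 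I would then sum over $(i,j)\in\ZZ^{2}$ and apply the generalized Hölder inequality with exponents $r',p',q'$ (which satisfy $\tfrac{1}{r'}+\tfrac{1}{p'}+\tfrac{1}{q'}=1$) to bound the sum by
\[
\left(\sum_{i,j}f_i^{\,p}g_{j-i}^{\,q}\right)^{\!1/r'} \left(\sum_{i,j}g_{j-i}^{\,q}h_j^{\,r}\right)^{\!1/p'} \left(\sum_{i,j}f_i^{\,p}h_j^{\,r}\right)^{\!1/q'}.
\]
Each of the three double sums factors completely after a change of summation index exploiting the translation invariance of the counting measure on $\ZZ$: for instance, $\sum_{i,j}f_i^{\,p}g_{j-i}^{\,q} = \|f\|_p^{\,p}\|g\|_q^{\,q}$, and similarly for the other two. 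Substituting back and collecting powers, the exponent of $\|f\|_p$ becomes $p(\tfrac{1}{r'}+\tfrac{1}{q'})=p\cdot\tfrac{1}{p}=1$, and analogously for $\|g\|_q$ and $\|h\|_r$, which gives exactly~\eqref{eq:young}.

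The only slightly delicate point is the bookkeeping of the exponents, but this is purely arithmetic once the relation $\tfrac{1}{p'}+\tfrac{1}{q'}+\tfrac{1}{r'}=1$ has been noted; there is no real analytic obstacle. For the sharpness claim I would simply take $f=g=h=\delta_{0}$, the sequence equal to $1$ at the origin and $0$ elsewhere: then $\|f\|_p=\|g\|_q=\|h\|_r=1$ while the left-hand side of~\eqref{eq:young} equals $1$, so the constant~$1$ in~\eqref{eq:young} cannot be improved. (Note that this is in contrast with the continuous setting of $\RR^n$, where the sharp constant is strictly smaller than~$1$ and is given by the Brascamp--Lieb constants.)
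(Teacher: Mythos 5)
Your proof is correct and follows essentially the same route as the paper: the decomposition $f_i g_{j-i} h_j = (f_i^p g_{j-i}^q)^{1/r'}(g_{j-i}^q h_j^r)^{1/p'}(f_i^p h_j^r)^{1/q'}$ is, up to notation, exactly the three auxiliary sequences $\varphi,\chi,\psi$ the paper feeds into the generalized H\"older inequality with exponents $r',q',p'$, and the sharpness example $f=g=h=\delta_0$ is the same. The only (immaterial) difference is presentational — you carry out the exponent bookkeeping inline rather than naming the three factors.
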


Young's inequality holds true in a very abstract setting
see, \eg~\cite[Thm.~20.18]{Hew-Ros79}, which implies Lemma~\ref{lem:young} as a particular case.
Differently from the continuous setting, \cf~\cite[Sec.~4.2]{Lie-Los01}, the optimal constant in the inequality \eqref{eq:young} is $1$, indeed. One can prove the discrete variant of Young' inequality by mimicking the standard arguments used typically for the proof of the continuous variant of the inequality. We provide this proof of Theorem~\ref{lem:young} for reader's convenience.

\begin{proof}[Proof of Lemma~\ref{lem:young}]
 Let $f\in\ell^{p}(\ZZ)$, $g\in\ell^{q}(\ZZ)$ $h\in\ell^{r}(\ZZ)$, for $p,q,r\geq1$ such that 
 \[
  \frac{1}{p}+\frac{1}{q}+\frac{1}{r}=2.
 \]
 Observe that
 \[
 \sum_{i,j\in\ZZ}|f_{i}||g_{j-i}||h_{j}|=\|\varphi\chi\psi\|_{1},
 \]
 for
 \begin{align*}
  \varphi_{i,j}&:=|f_{i}|^{p\left(1-\frac{1}{r}\right)}|g_{j-i}|^{q\left(1-\frac{1}{r}\right)},\\
  \chi_{i,j}&:=|f_{i}|^{p\left(1-\frac{1}{q}\right)}|h_{j}|^{r\left(1-\frac{1}{q}\right)},\\
  \psi_{i,j}&:=|g_{j-i}|^{q\left(1-\frac{1}{p}\right)}|h_{j}|^{r\left(1-\frac{1}{p}\right)}.
 \end{align*}
 Moreover, $\varphi\in\ell^{u}(\ZZ^{2})$, $\chi\in\ell^{v}(\ZZ^{2})$, $\varphi\in\ell^{w}(\ZZ^{2})$, where $u,v,w\geq1$ are such that 
 \[
 \frac{1}{u}+\frac{1}{r}=1, \quad \frac{1}{v}+\frac{1}{q}=1, \quad \frac{1}{w}+\frac{1}{p}=1.
 \]
 By the assumptions, the indices $u,v,w$ fulfill
 \[
  \frac{1}{u}+\frac{1}{v}+\frac{1}{w}=1.
 \]
 and the application of (generalized) H\"{o}lder's inequality yields
 \begin{align*}
  \sum_{i,j\in\ZZ}|f_{i}||g_{j-i}||h_{j}|&\leq\|\varphi\|_{u}\|\chi\|_{v}\|\psi\|_{w}
  =\left(\|f\|_{p}^{p}\|g\|_{q}^{q}\right)^{1-\frac{1}{r}}
    \left(\|f\|_{p}^{p}\|h\|_{r}^{r}\right)^{1-\frac{1}{q}}
    \left(\|g\|_{q}^{q}\|h\|_{r}^{r}\right)^{1-\frac{1}{p}}\\
    &=\|f\|_{p}\|g\|_{q}\|h\|_{r}.
 \end{align*}
 
  The constant $1$ on the right-hand side of~\eqref{eq:young} is optimal,
  indeed the equality is attained for $f=\{f_n\}_{n\in\ZZ}$, $g=\{g_n\}_{n\in\ZZ}$, and
  $h=\{h_n\}_{n\in\ZZ}$ with entries
  \[
    f_n = f_0\delta_{n,0}, \quad g_n =g_0\delta_{n,0},\quad\mbox{ and }\quad h_n = h_0\delta_{n,0},
  \]
  for any $f_0, g_0, h_0 \in \Com$.
\end{proof}

\begin{proof}[Proof of Theorem~\ref{thm:spec_bound_improved}]
	Assume $\lambda\in\sigma(D_V)\setminus\sigma(D_0)$ and $k\in\CC$, $0<|k|<1$ is such that~\eqref{Jouk} holds. The main idea of the proof relies again on the implication from the Birman--Schwinger principle:
	\[
	\lambda\in\sigma(D_V)\setminus\sigma(D_0) \quad\Longrightarrow\quad \|K(\lambda)\|\geq1.
	\]
	
	The case $V\in\ell^{1}(\ZZ,\CC^{2\times2})$: Using the definition of the Birman--Schwinger operator~\eqref{BS.op}, we may estimate
	\begin{align*}
	\|K(\la)\phi\|^{2}_{2}&=\sum_{i\in\ZZ}\bigg|\sum_{j\in\ZZ}K_{i,j}(\la)\phi_{j}\bigg|_{2}^{2}\leq\sum_{i\in\ZZ}\left(\sum_{j\in\ZZ}\left|\sqrt{w_{i}}\right||T_{j-i}(k)|\left|\sqrt{w_{j}}\right||\phi_{j}|_{2}\right)^{\!2}\\
	&\leq\sup_{n\in\ZZ}|T_{n}(k)|^{2}\sum_{i\in\ZZ}|\ups_{i}|\left(\sum_{j\in\ZZ}\left|\sqrt{w_{j}}\right||\phi_{j}|_{2}\right)^{\!2}\\
	&\leq\sup_{n\in\ZZ}|T_{n}(k)|^{2}\|V\|_{1}^{2}\|\phi\|_{2}^{2},
	\end{align*}
	for $\phi\in\ell^{2}(\ZZ,\Com^{2})$. Here $|\cdot|_{2}$ stands for the Euclidian norm on~$\CC^{2}$. Moreover, it follows from~\ref{Ti} that
	\[
	|T_{i}(k)|\leq|T_{1}(k)|,
	\]
	for all $i\neq0$. Consequently, we get
	\begin{equation}
	\|K(\la)\|\leq\max\{|T_{0}(k)|,|T_{1}(k)|\}\|V\|_{1}
	\label{eq:ineq_K_max}
	\end{equation}
	and the Birman--Schwinger principle implies the claim for the case $p=1$.
	
	The case $V\in\ell^{p}(\ZZ,\CC^{2\times2})$ with $p>1$: Making use of~\eqref{BS.op}, one obtains, for $\phi,\psi\in\ell^{2}(\ZZ,\Com^{2})$, the estimate
	\[
	|\left(\phi,K(\lambda)\psi\right)_{\ell^{2}}|\leq\sum_{i,j\in\ZZ}\left|\sqrt{w_{i}}\right||\phi_{i}|_{2}\left|T_{j-i}(k)\right|\left|\sqrt{w_{j}}\right||\psi_{j}|_{2},
	\]
	The right-hand side is in a suitable form for the application of discrete Young's inequality. Thus, applying~\eqref{eq:young} with $p$ and $r$ replaced by $2p/(p+1)$ and $q$ the H{\" o}lder dual index to $p$, \ie~$q=p/(p-1)$, one arrives at the estimate
	\[
	|\left(\phi,K(\lambda)\psi\right)_{\ell^{2}}|\leq 
	\left[\sum_{i,j\in\ZZ}\left(\left|\sqrt{w_{i}}\right||\phi_{i}|_{2}\right)^{\frac{2p}{p+1}}\right]^{\frac{p+1}{2p}}
	\left\|\left|T(k)\right|\right\|_{q}
	\left[\sum_{i,j\in\ZZ}\left(\left|\sqrt{w_{i}}\right||\phi_{i}|_{2}\right)^{\frac{2p}{p+1}}\right]^{\frac{p+1}{2p}}
	\]
	where $|T(k)|$ stands for the doubly infinite sequence with entries $|T_{i}(k)|$, $i\in\ZZ$. Noticing that, by H{\" o}lder's inequality, 
	\[
	\left[\sum_{i,j\in\ZZ}\left(\left|\sqrt{w_{i}}\right||\phi_{i}|_{2}\right)^{\frac{2p}{p+1}}\right]^{\frac{p+1}{2p}}\leq\left\|\sqrt{W}\right\|_{2p}\left\|\phi\right\|_{2}=\|V\|_{p}^{1/2}\|\phi\|_{2}
	\]
	and
	\begin{align*}
	\left\|\left|T(k)\right|\right\|_{q}&=\left(\sum_{i\in\ZZ}|T_{i}(k)|^{q}\right)^{1/q}=\left(|T_{0}(k)|^{q}+2|T_{1}(k)|^{q}\sum_{i=0}^{\infty}|k|^{qi}\right)^{1/q}\\
	&=\left(|T_{0}(k)|^{q}+\frac{2}{1-|k|^{q}}|T_{1}(k)|^{q}\right)^{1/q},
	\end{align*}
	where we have used~\eqref{Ti}, we obtain
	\[
	|\left(\phi,K(\lambda)\psi\right)_{\ell^{2}}|\leq 
	\left(|T_{0}(k)|^{q}+\frac{2}{1-|k|^{q}}|T_{1}(k)|^{q}\right)^{1/q}\|V\|_{p}\|\phi\|_{2}\|\psi\|_{2}
	\]
	for any $\phi,\psi\in\ell^{2}(\ZZ,\Com^{2})$. In other words, we have the estimate
	\[
	\|K(\lambda)\|\leq 
	\left(|T_{0}(k)|^{q}+\frac{2}{1-|k|^{q}}|T_{1}(k)|^{q}\right)^{1/q}\|V\|_{p}
	\]
	and the Birman--Schwinger principle implies the claim for the case $p>1$.
\end{proof}
\subsection{Proof of Theorem~\ref{thm:stein-impr}}
The proof is completely analogous to that of 
Theorem~\ref{thm:p-norm.stein}. It follows from the application of 
Stein's interpolation theorem using this time the 
inequality~\eqref{eq:ineq_K_max} which plays the same role as
the inequality~\eqref{estim.BS.case.1} 
in the proof of Theorem~\ref{thm:p-norm.stein}.

\section{Optimality for $\ell^1$-potentials}\label{sec:optim}
In~\cite{Ibr-Sta-19}, a similar approach as in the proof of
Theorem~\ref{thm:1-norm} was used to deduce a spectral enclosure for the discrete Schr{\" o}dinger operator with an $\ell^{1}$-potential. In this case, the obtained spectral enclosure turned out to be \emph{optimal} in the following sense: every point from the boundary curve of the spectral enclosure except possible intersections with the spectrum of the unperturbed operator is an eigenvalue of some discrete Schr{\" o}dinger operator with particularly chosen $\ell^{1}$-potential. It means that the obtained spectral enclosure is, in a sense, the best possible since it cannot be further squeezed.

On the contrary, the spectral enclosure of Theorem~\ref{thm:1-norm}
for the discrete Dirac operators with $\ell^{1}$-potentials is
\emph{not optimal} in the aformentioned sense. 
Concerning the optimality of the improved spectral enclosure of Theorem~\ref{thm:spec_bound_improved} in the case of $\ell^{1}$-potential we were not able to prove it in its full generality as specified above. In other words, using the notation of~Theorem~\ref{thm:spec_bound_improved} and denoting the boundary curve as
\[
 \Gamma_{Q}:=\{\lambda\in\Com \mid h_{\infty}(\lambda,m)Q=1\},
\]
for a fixed parameter $Q>0$, we do not have a proof demonstrating that \emph{every} point $\lambda\in\Gamma_{Q}\setminus\sigma(D_0)$ is an eigenvalue of some~$D_{V}$ with $\|V\|_{1}=Q$. However, we can show a sort of a~\emph{partial optimality}. It means that we can show that at least some points from~$\Gamma_{Q}$ are eigenvalues of~$D_{V}$ with a particularly chosen potential~$V\in\ell^1(\ZZ, \Com^{2\times 2})$. These particular points have to be additionally included in the region
\[
  \mathcal{D}:=\{\lambda \in \Com \mid
  |T_{0}(k)|\geq |T_{1}(k)| \mbox{ for } k\in\Com,\, 0<|k|<1, \mbox{such that~\eqref{Jouk} holds}\},
\]
\ie~in the region where the diagonal dominance of the resolvent operator $(D_0-\lambda)^{-1}$ actually happens.
In the proof below, we construct explicitly a potential $V\in\ell^1(\ZZ, \Com^{2\times 2})$, $\|V\|_{1}=Q$, such that $\lambda\in\Gamma_{Q}\cap\mathcal{D}$ not belonging to $\sigma(D_{0})$ is an eigenvalue of~$D_V$.

\begin{Theorem}\label{thm:partial_optim}
  For every $Q>0$ and $\lambda\in\Gamma_{Q}\cap\mathcal{D}\setminus\sigma(D_0)$, there exists $V\in \ell^1(\Int,\Com^{2\times 2})$ with 
  $\norm{V}_1=Q$ such that   
  \[
   \lambda\in\sigma_{p}(D_V).
  \]
\end{Theorem}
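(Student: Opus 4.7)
The plan is to exhibit $\lambda$ as an eigenvalue of $D_{V}$ for a carefully chosen rank-one potential supported at a single lattice site. Fix $Q>0$ and $\lambda\in\Gamma_{Q}\cap\mathcal{D}\setminus\sigma(D_0)$, and let $k$ be the associated point in the punctured unit disk determined by~\eqref{Jouk}. Since $\lambda\in\mathcal{D}$, we have $|T_{0}(k)|\geq|T_{1}(k)|$, so $h_{\infty}(\lambda,m)=|T_{0}(k)|$; the condition $\lambda\in\Gamma_{Q}$ then forces the calibration
\[
  |T_{0}(k)|=\frac{1}{Q}.
\]
I would fix a singular value decomposition $T_{0}(k)=s_{1}u_{1}v_{1}^{*}+s_{2}u_{2}v_{2}^{*}$, ordered so that $s_{1}=|T_{0}(k)|=1/Q\geq s_{2}\geq 0$, with orthonormal $\{u_{1},u_{2}\}$ and $\{v_{1},v_{2}\}$ in $\CC^{2}$.

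Next, define $V=(\ups_{n})_{n\in\ZZ}\in\ell^{1}(\ZZ,\CC^{2\times2})$ by $\ups_{n}=0$ for $n\neq0$ and by the rank-one matrix
\[
  \ups_{0}:=-Q\,v_{1}u_{1}^{*}.
\]
Since $u_{1}$ and $v_{1}$ are unit vectors, $|\ups_{0}|=Q$ and hence $\|V\|_{1}=Q$. As a candidate eigenvector I take
\[
  \psi_{n}:=-T_{-n}(k)\,\ups_{0}u_{1},\qquad n\in\ZZ,
\]
which by~\eqref{Ti} satisfies $|\psi_{n}|=O(|k|^{|n|})$ and therefore belongs to $\cH$. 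Using the singular value decomposition together with $Qs_{1}=1$, a direct two-line computation yields
\[
  T_{0}(k)\ups_{0}=-Q\,s_{1}u_{1}u_{1}^{*}=-u_{1}u_{1}^{*},
\]
so in particular $\psi_{0}=-T_{0}(k)\ups_{0}u_{1}=u_{1}\neq0$, confirming that $\psi$ is nontrivial.

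It then remains to verify $D_{V}\psi=\lambda\psi$. By the very definition of $\psi$, one has $\psi=-(D_{0}-\lambda)^{-1}(\ups_{0}u_{1}\,\delta_{0})$, which gives $(D_{0}-\lambda)\psi=-\ups_{0}u_{1}\,\delta_{0}$. Since $V\psi$ is supported only at $n=0$ with value $\ups_{0}\psi_{0}=\ups_{0}u_{1}$, this is exactly $(D_{0}+V-\lambda)\psi=0$, proving $\lambda\in\sigma_{p}(D_{V})$.

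The main conceptual point is the choice of $\ups_{0}$: the SVD identifies the direction in which $T_{0}(k)$ attains its operator norm, and the requirement that the rank-one block $\ups_{0}$ realizes $-1$ as an eigenvalue of the $2\times 2$ matrix $T_{0}(k)\ups_{0}$ precisely fixes $\|V\|_{1}=1/|T_{0}(k)|=Q$ on $\Gamma_{Q}$. The restriction $\lambda\in\mathcal{D}$ is what makes this concentrated construction succeed: had $|T_{0}(k)|<|T_{1}(k)|$, the single-site ansatz at $n=0$ would underutilise the bound from the proof of Theorem~\ref{thm:spec_bound_improved}, and one would be forced to distribute the potential across neighbouring sites, which seems to be the true obstruction to proving optimality on the complement $\Gamma_{Q}\setminus\mathcal{D}$.
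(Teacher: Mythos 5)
Your proof is correct and follows essentially the same strategy as the paper's: concentrate the potential at a single site $n=0$, choose the $2\times2$ block $\ups_0$ with $|\ups_0|=Q=1/|T_0(k)|$ so that $1+\ups_0T_0(k)$ is singular, and read off the eigenvector from the resolvent. The only difference is cosmetic: you pick the rank-one matrix $\ups_0=-Q\,v_1u_1^*$ from the singular value decomposition of $T_0(k)$, while the paper takes $\ups_0=-Q^2\,T_0^*(k)$ (which need not be rank one); both have spectral norm $Q$ and both annihilate the leading singular direction, so the resulting single-site eigenvector is the same up to normalisation. You also verify $D_V\psi=\lambda\psi$ directly rather than passing through the equation $\phi=-V(D_0-\lambda)^{-1}\phi$, but these are just two ends of the same Birman--Schwinger correspondence.
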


\begin{proof}
  Let $Q>0$ and $\lambda\in\Gamma_{Q}\cap\mathcal{D}\setminus\sigma(D_0)$ be fixed. Denote by $k$ the unique point such that $0<|k|<1$ and related to~$\lambda$ by~\eqref{Jouk}.

  First, note that since $\lambda\in\Gamma_{Q}\cap\mathcal{D}\setminus\sigma(D_0)$ one has $h_{\infty}(\lambda,m)=|T_0(k)|=Q^{-1}$. We define the potential sequence $V:=\{\ups_n\}_{n \in \Int}$ entry-wise as follows:
  \[
    \ups_n:=-\delta_{n,0}\,Q^{2}\,T_{0}^{*}(k).
  \]
  Then clearly 
  \[
   \|V\|_{1}=|\ups_{0}|=Q^{2}|T_0(k)|=Q.
  \]
 
  Next, observe that the eigenvalue equation $D_{V}\psi=\lambda\psi$ has a nontrivial solution~$\psi\in\ell^{2}(\ZZ,\Com^{2})$ if and only if there exists a nontrivial vector $\phi\in\ell^{2}(\ZZ,\Com^{2})$ such that
  \begin{equation}\label{eq:eigenvector.resolvent.side}
    \phi = - V(D_{0}-\lambda)^{-1} \phi.
  \end{equation}
  A nontrivial solution~$\phi\in\ell^{2}(\ZZ,\Com^{2})$ of the equation~\eqref{eq:eigenvector.resolvent.side} can be chosen so that $\phi_{n}=0$ for all $n\neq0$ and $\phi_{0}\in\Com^{2}$ is a non-trivial solution of the linear system
  \[
    \left(1+\ups_0 T_0(k)\right)\phi_0 = 0
  \]
  which exists since the matrix $1+\ups_{0}T_{0}(k)$ is singular due to the particular choice of the potential~$V$.
\end{proof}
\begin{Remark}
The construction in Theorem~\ref{thm:partial_optim} is inspired 
by~\cite{Cuenin-Laptev-Tretter_2014} where a sharp spectral 
enclosure was obtained for the one-dimensional Dirac operator
on the real line.
In the continuous setting, the free resolvent has a diagonally 
dominant kernel for every spectral parameter~$\lambda$ from the 
resolvent set and thanks to this $\delta$--potentials can 
serve as an example for proving optimality of the whole 
spectral enclosure. 
The analysis of the discrete setting is not completely analogous, since diagonal dominance of the resolvent only happens in the subdomain $\mathcal{D} \subsetneq \Com$ (see Remark~\ref{rem:diagonal.dominance}).
\end{Remark}

A comparison of the spectral enclosures from Theorems~\ref{thm:1-norm}
and~\ref{thm:spec_bound_improved} for the case of $\ell^{1}$-potentials 
is numerically illustrated in Figure~\ref{fig:sp_ell1_compare} of Appendix~B.
This Figure also shows the parts of the boundary curves of the improved spectral bound of Theorem~\ref{thm:spec_bound_improved} in the case of $\ell^{1}$-potentials that can be reached by an eigenvalue of a concretely chosen discrete Dirac operator as discussed in Theorem~\ref{thm:partial_optim}.

\section*{Appendix. Illustrative and comparison plots}

\subsection*{A: Plots of the spectral enclosures from Theorem~\ref{thm:p-norm.stein}}

The spectral enclosure for the $\ell^{1}$-potentials from Theorem~\ref{thm:1-norm} was displayed already in the introduction in Figure~\ref{fig:ell1plots}. Similarly, we provide several plots illustrating the spectral enclosures from Theorem~\ref{thm:p-norm.stein} in Figure~\ref{fig:sp_thm2} below. Namely, the plots show the boundary curves given by the equation
\[
g_p(\lambda,m) \|V\|_p=1,
\]
for $m=1$, $\|V\|_p=\frac{j}{4}$, $j\in\{1,2,\dots,7\}$, and four choices of $p\in\{3/2,2,3,5\}$.

\begin{figure}[htb!]
    \centering
    \begin{subfigure}[c]{0.49\textwidth}
	\includegraphics[width=\textwidth]{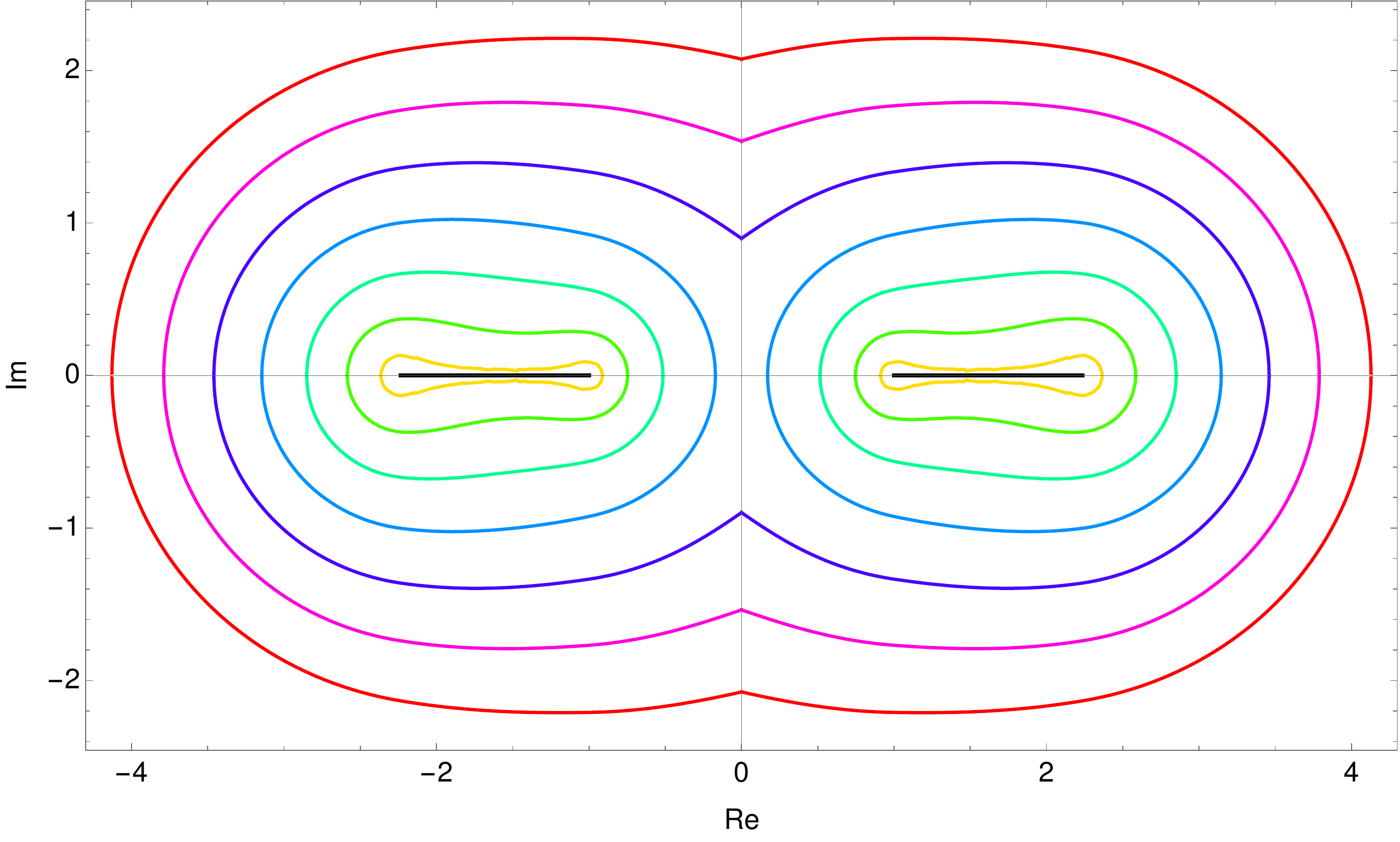}
    \caption{$p=3/2$}
    \end{subfigure}
    \begin{subfigure}[c]{0.49\textwidth}
	\includegraphics[width=\textwidth]{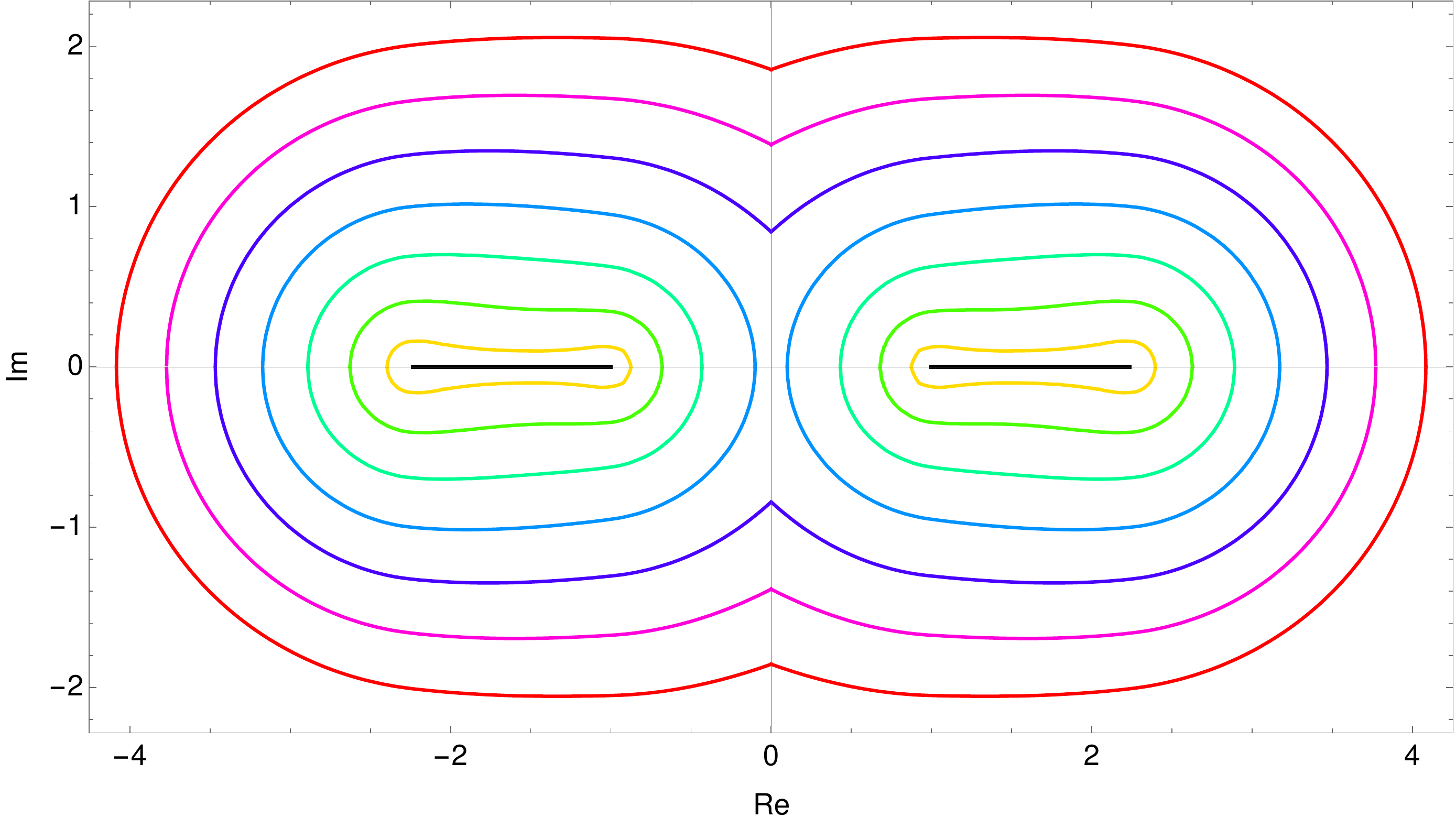}
	\caption{$p=2$}
    \end{subfigure}
    \vskip6pt
    \begin{subfigure}[c]{0.49\textwidth}
	\includegraphics[width=\textwidth]{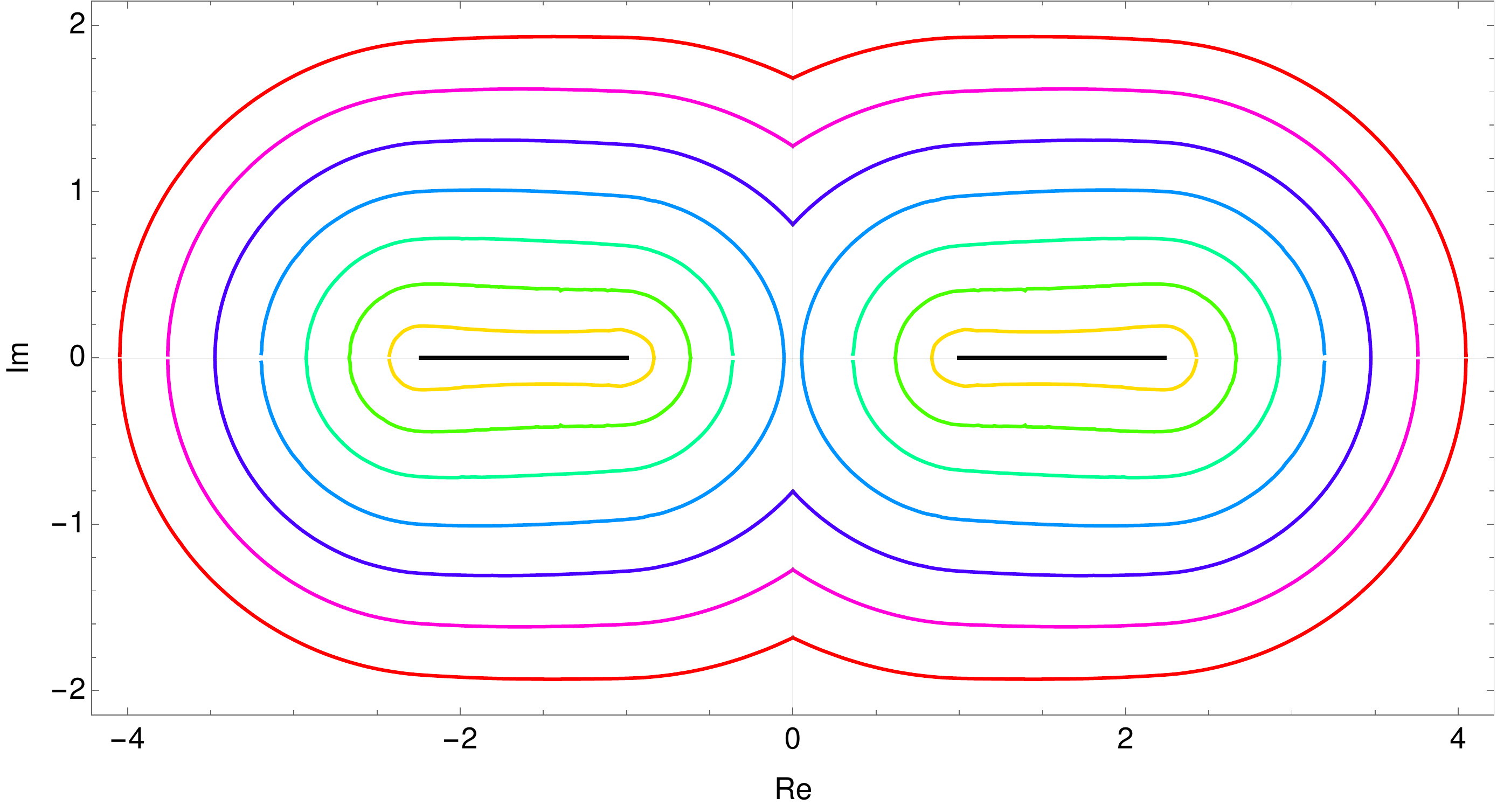}
	\caption{$p=3$}
    \end{subfigure}
    \begin{subfigure}[c]{0.49\textwidth}
	\includegraphics[width=\textwidth]{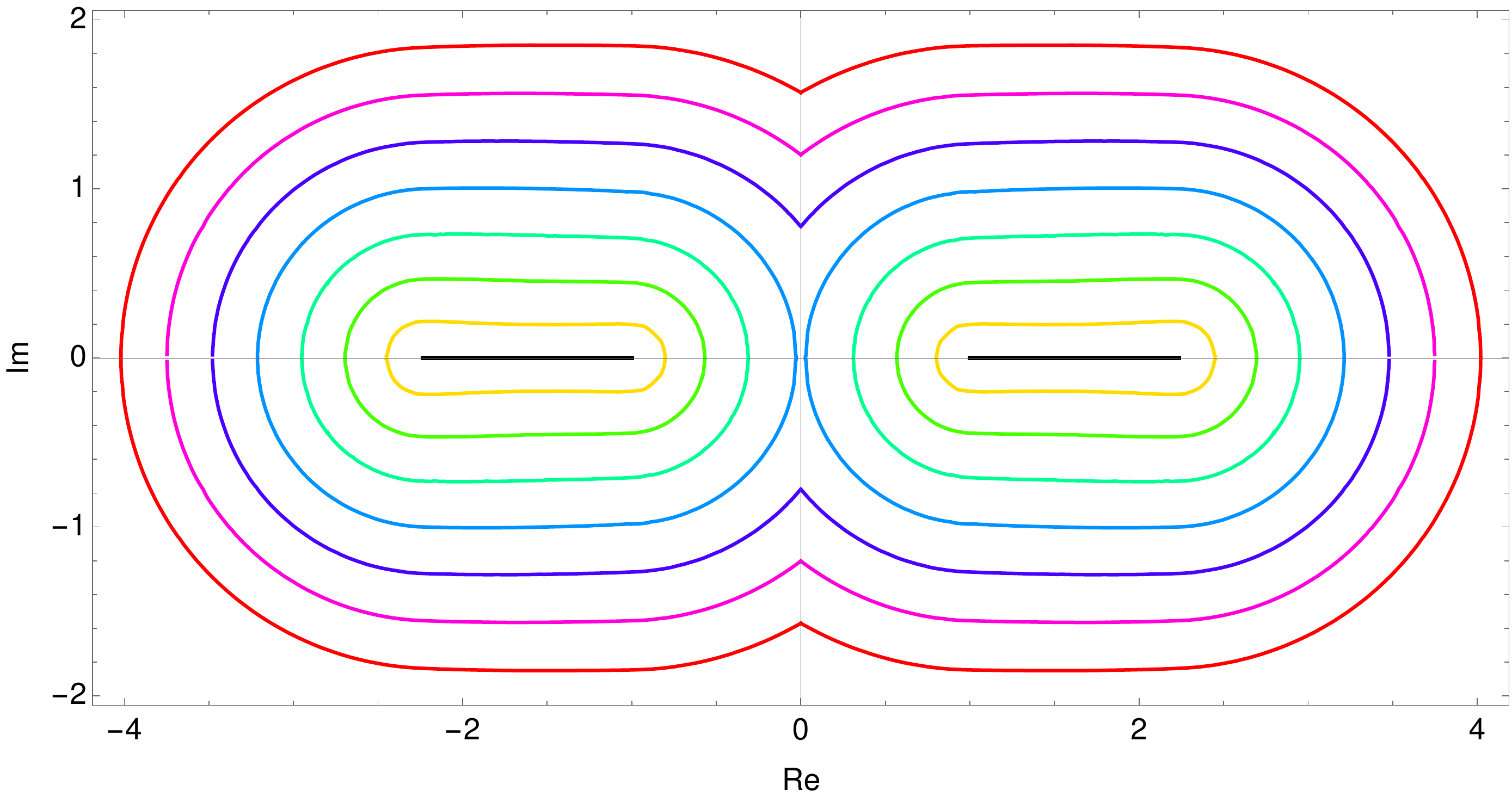}
	\caption{$p=5$}
    \end{subfigure}
    \caption{The plots of the expanding boundary curves 
			corresponding to the spectral enclosure from Theorem~\ref{thm:p-norm.stein}
			for $m=1$, $\|V\|_p=\frac{j}{4}$, $j\in\{1,2,\dots,7\}$, and four choices of the parameter~$p>1$.}
    \label{fig:sp_thm2}
\end{figure}

\subsection*{B: Comparison plots for the $\ell^{1}$-bounds of Theorems~\ref{thm:1-norm} and~\ref{thm:spec_bound_improved} and optimality}

Next set of plots show the boundary curve of the spectral enclosure from Theorem~\ref{thm:1-norm} together with the corresponding improved result of Theorem~\ref{thm:spec_bound_improved} for a comparison. Moreover, the boundary curve of the improved spectral enclosure is made in two colors distinguishing the parts that are eigenvalues of some discrete Dirac operators as discussed in Theorem~\ref{thm:partial_optim}. 

More concretely, in Figure~\ref{fig:sp_ell1_compare}, we plot the boundary curve of Theorem~\ref{thm:1-norm} by \emph{blue dashed lines} for $m=1/2$ and several choices of~$\|V\|_{1}$. At the same time, we add a plot of the curve defined by the equation 
\[
 \max\{|T_{0}(k)|,|T_{1}(k)|\}\|V\|_{1}=1,
\]
by \emph{red or green solid lines} for the same choice of parameters. The parts of the curve made in green belong to the set~$\mathcal{D}$ and hence these points are eigenvalues of some discrete Dirac operators with $\ell^{1}$-potentials. The remaining parts are made in red.

\begin{figure}[H]
    \centering
    \begin{subfigure}[c]{0.9\textwidth}
	\includegraphics[width=\textwidth]{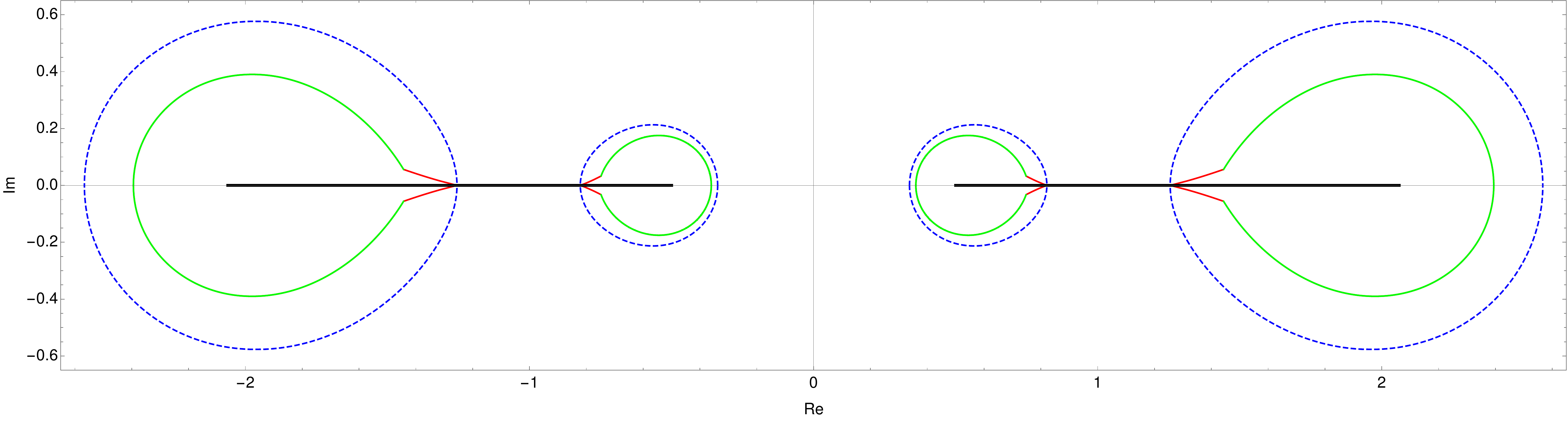}
	\vskip-6pt
    \caption{For $\|V\|_{1}=0.75$, all points of the boundary of the improved spectral enclosure except the red spikes are eigenvalues of particular discrete Dirac operators.}
    \end{subfigure}
    \begin{subfigure}[c]{0.9\textwidth}
    \vskip1pt
	\includegraphics[width=\textwidth]{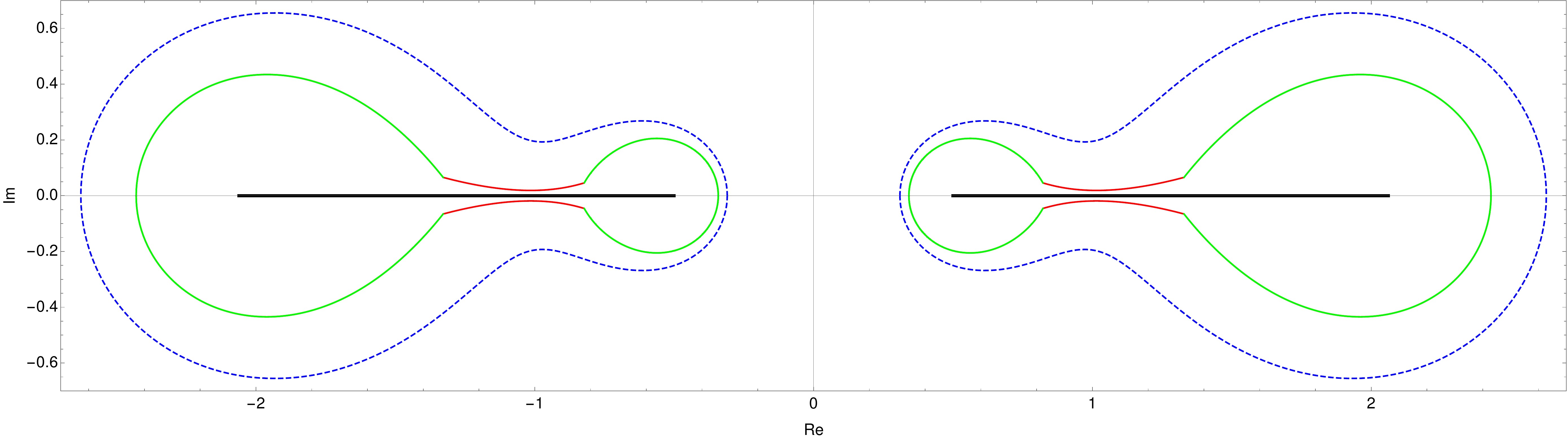}
	\vskip-6pt
	\caption{For $\|V\|_{1}=0.8$, all points of the boundary of the improved spectral enclosure except the two thin corridors connecting the rounded regions are eigenvalues of particular discrete Dirac operators.}
    \end{subfigure}
    \begin{subfigure}[c]{0.9\textwidth}
    \vskip1pt
	\includegraphics[width=\textwidth]{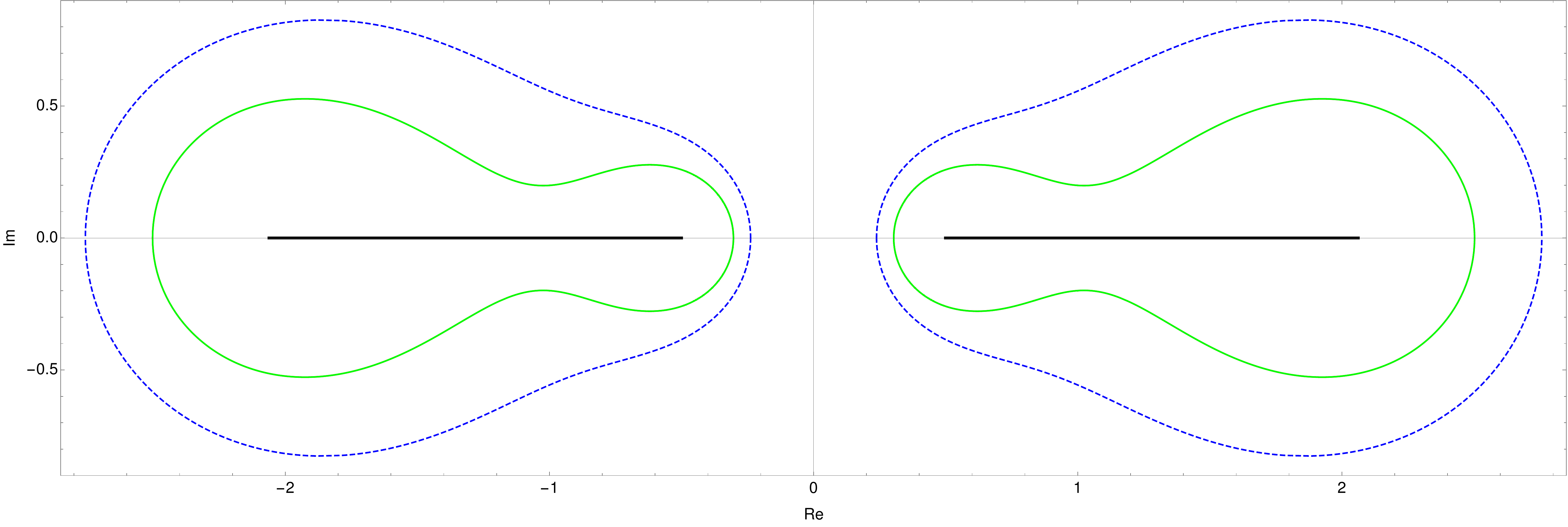}
	\vskip-6pt
	\caption{For $\|V\|_{1}=0.9$ (and higher values as well), it seems that all points of the boundary of the improved spectral enclosure are eigenvalues of particular discrete Dirac operators.}
    \end{subfigure}
     \caption{Boundary curves for the spectral enclosures Theorem~\ref{thm:1-norm} (dashed blue lines) compared to the corresponding result of Theorem~\ref{thm:spec_bound_improved} (solid red/green lines) with $m=1/2$. Green color demonstrates the partial optimality in the sense of Theorem~\ref{thm:partial_optim}.}
    \label{fig:sp_ell1_compare}
\end{figure}

\subsection*{C: Comparison plots for the $\ell^{p}$-bounds from Theorems~\ref{thm:p-norm.stein},\ref{thm:spec_bound_improved}, and Corollary~\ref{cor:p-norm}}

In the next plots, we compare the spectral enclosures given in Theorems~\ref{thm:p-norm.stein} and~\ref{thm:spec_bound_improved} for $\ell^{p}$-potentials with $p>1$. As an extra, we add also the spectral enclosure of Corollary~\ref{cor:p-norm} into these plots. In this numerical comparison, we exclude the result of Theorem~\ref{thm:stein-impr} due to its complexity and non-reliability of the numerical computations. Note that it is clear from the proofs that Theorem~\ref{thm:stein-impr} is an improvement of Theorem~\ref{thm:p-norm.stein}.

The comparison is made in plots in Figure~\ref{fig:ellp_compare} where the boundary curve of the the spectral enclosure from Theorem~\ref{thm:p-norm.stein} is made in \emph{solid yellow lines}, from Theorem~\ref{thm:spec_bound_improved} in \emph{red dashed lines}, and from Corollary~\ref{cor:p-norm} in \emph{blue dotted lines} for $m=1$, $\|V\|_{p}=0.7$, and four choices of the parameter $p\in\{3/2,2,3,5\}$.

It is by no means evident whether one of the spectral enclosures of Theorems~\ref{thm:p-norm.stein} and~\ref{thm:spec_bound_improved} is better than the other. However, numerical experiments indicate that none is better than the other, \ie$\,$~none is a subset of the other, in general.
\begin{figure}[H]
	\centering
	\begin{subfigure}[c]{0.49\textwidth}
		\includegraphics[width=\textwidth]{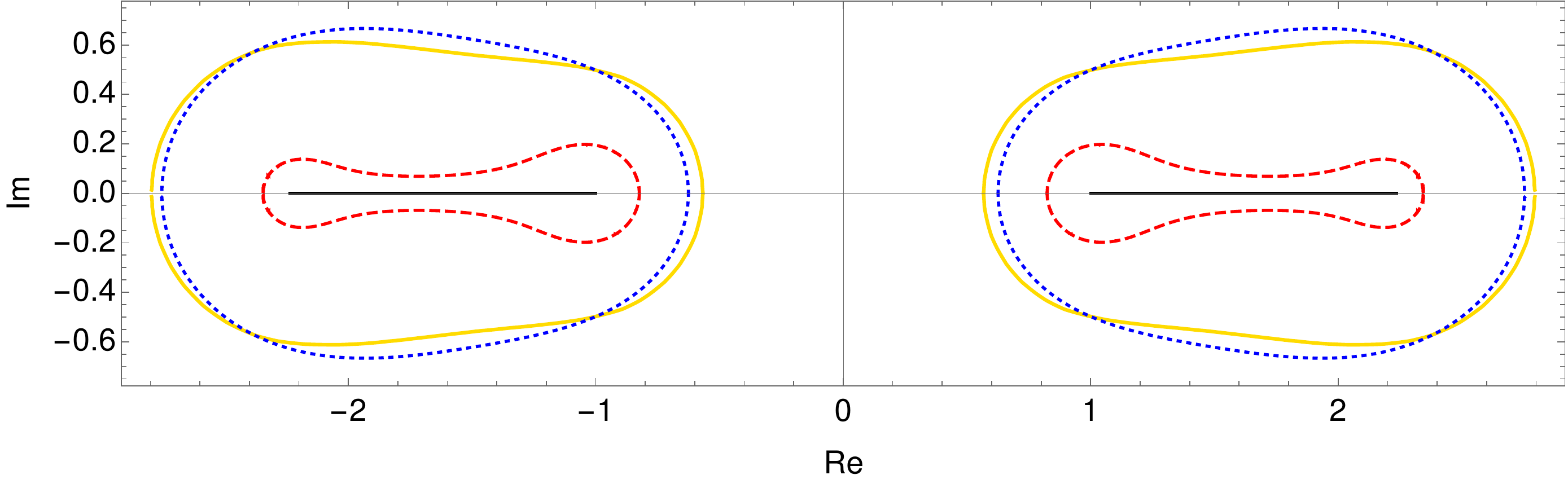}
		\caption{$p=3/2$}
	\end{subfigure}
	\begin{subfigure}[c]{0.49\textwidth}
		\includegraphics[width=\textwidth]{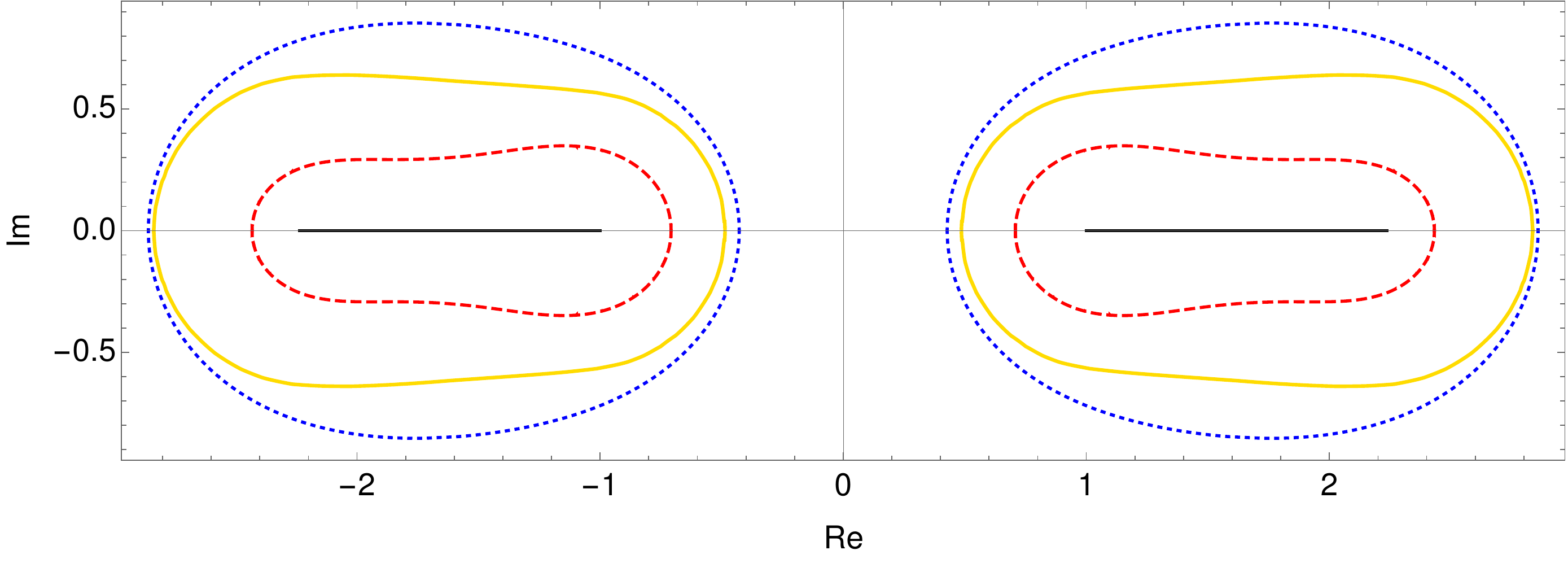}
		\caption{$p=2$}
	\end{subfigure}
	\vskip6pt
	\begin{subfigure}[c]{0.49\textwidth}
		\includegraphics[width=\textwidth]{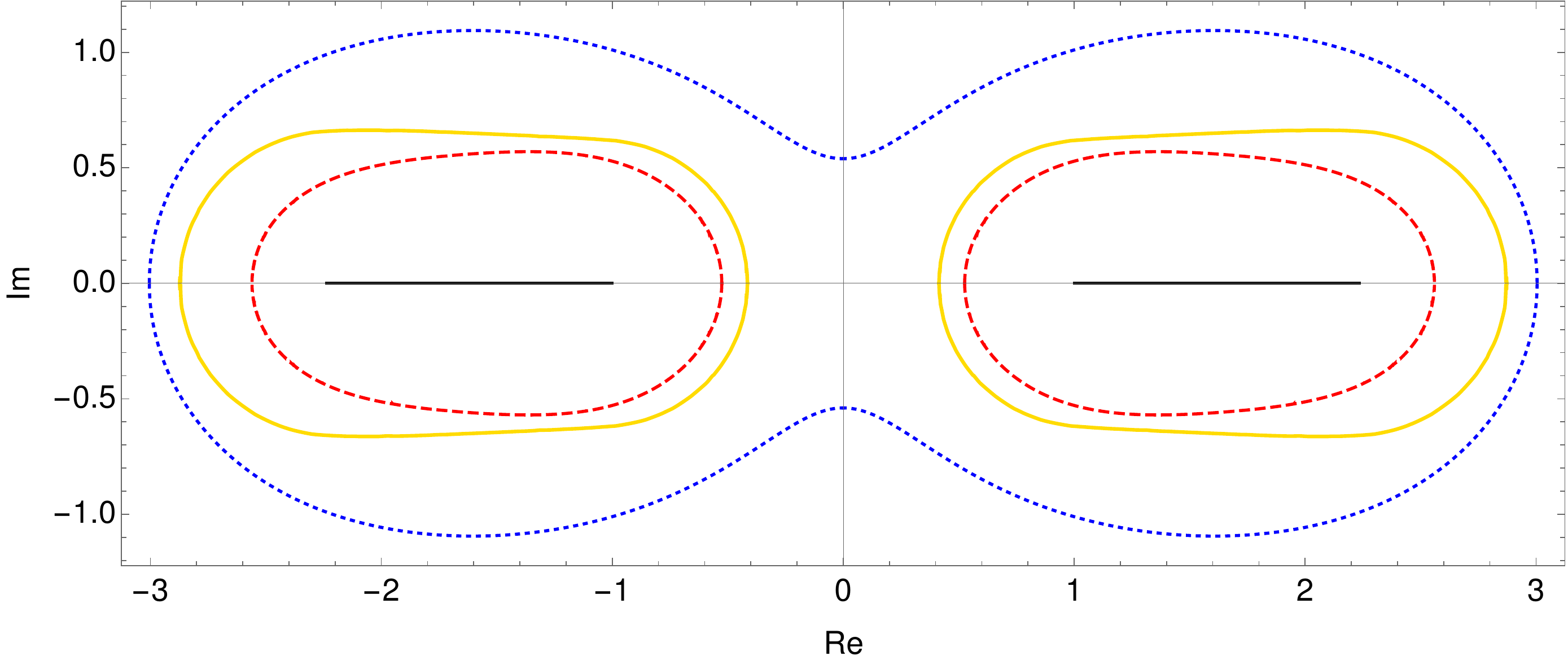}
		\caption{$p=3$}
	\end{subfigure}
	\begin{subfigure}[c]{0.49\textwidth}
		\includegraphics[width=\textwidth]{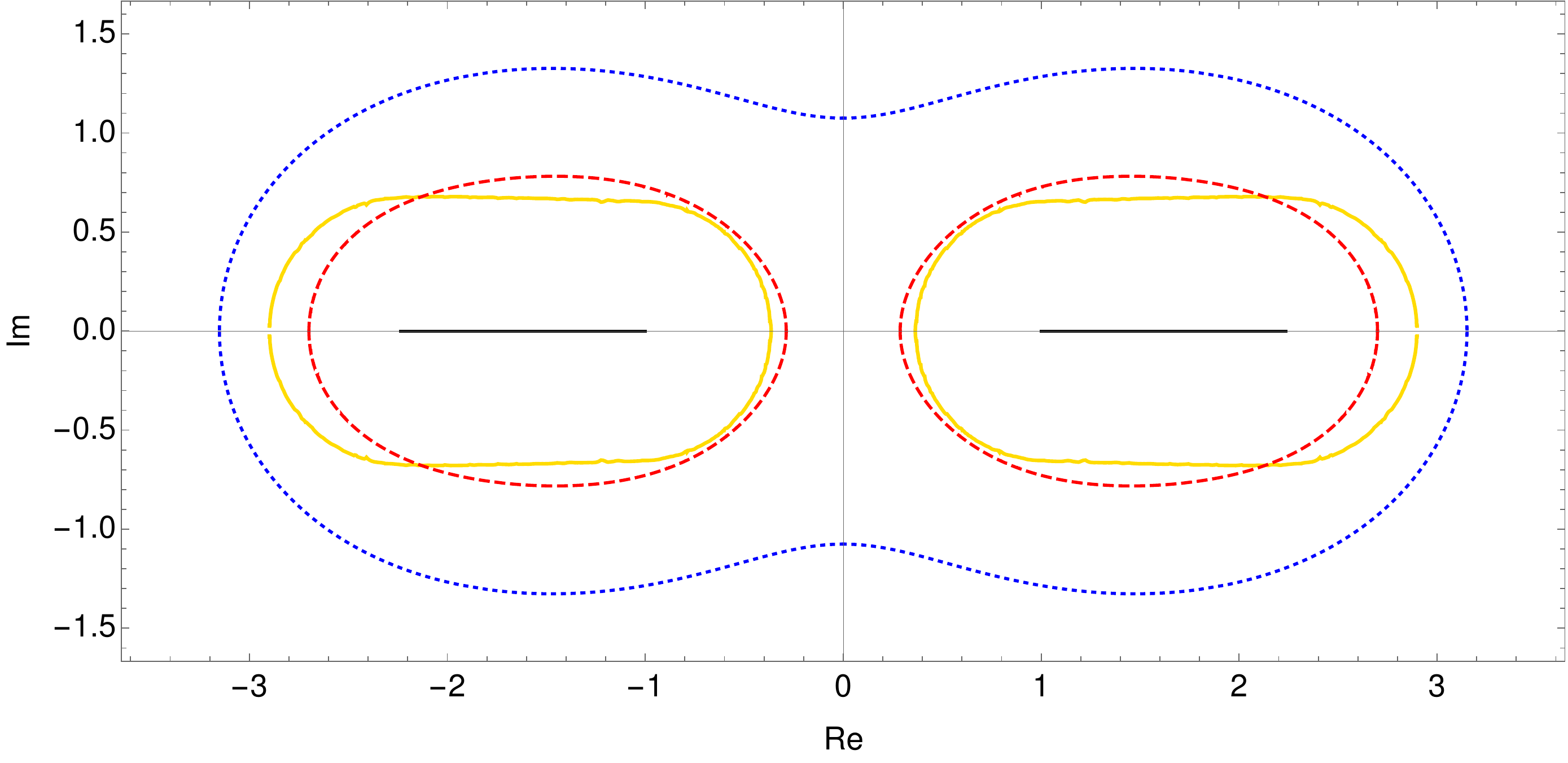}
		\caption{$p=5$}
	\end{subfigure}
	\caption{Boundary curves of spectral enclosures of Theorem~\ref{thm:p-norm.stein} (solid yellow lines), Theorem~\ref{thm:spec_bound_improved} (red dashed lines), and Corollary~\ref{cor:p-norm} (blue dotted lines) for $m=1$, $\|V\|_{p}=0.7$, and four choices of the parameter~$p>1$.}
	\label{fig:ellp_compare}
\end{figure}

\subsection*{D: A plot for Remark~\ref{rem:diagonal.dominance}}

Finally, as an illustration for Remark~\ref{rem:diagonal.dominance}, Figure~\ref{fig:T01ineq} shows for what $k$'s within the unit disk the norm of the diagonal element $T_{0}(k)$ of the resolvent~\eqref{eq:res_D0} is not dominant.
\begin{figure}[H]
	\centering
	\includegraphics[width=0.45\textwidth]{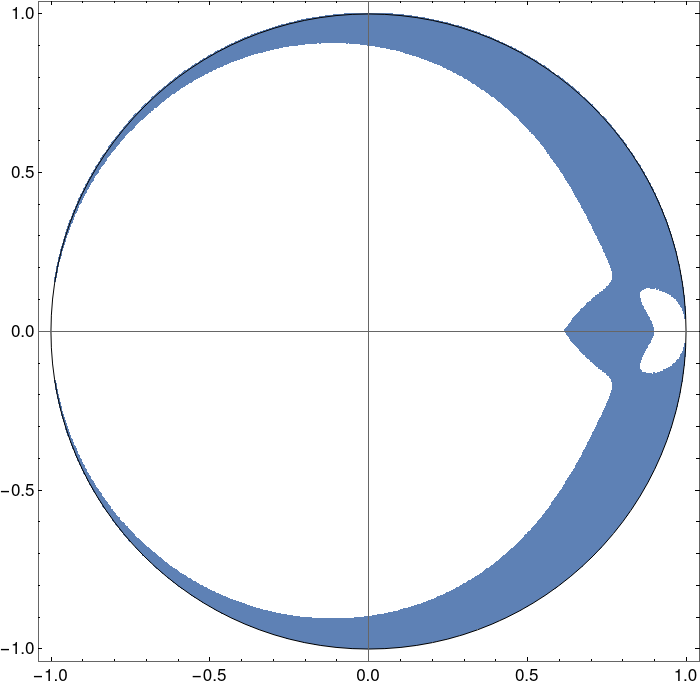}
	\caption{The blue subregion of the unit disk indicates the set of $k$'s for which $|T_0(k)|<|T_1(k)|$ when $m=1/8$.}
	\label{fig:T01ineq}
\end{figure}
%
%
%

\subsection*{Acknowledgment}
%
The research of B.C.~was partially supported by the grant 
No.~17-01706S of the Czech Science Foundation (GA\v{C}R) 
and by Fondo Sociale Europeo -- Programma Operativo 
Nazionale Ricerca e Innovazione 2014-2020, progetto 
PON:~progetto AIM1892920-attivit\`a 2, 
linea 2.1 -- CUP H95G18000150006 ATT2. 
The research of D.~K.~was partially supported 
by the GACR grant No.~18-08835S. 
F.{\v S}.~acknowledges financial support by the Ministry 
of Education, Youth and Sports of the Czech Republic 
project no.~CZ.02.1.01/0.0/0.0/16\_019/0000778
%


 \bibliography{bib_disc_dir_v20}
 \bibliographystyle{amsplain}

\end{document}